\documentclass[12pt]{amsart}
\usepackage[utf8]{inputenc}
\usepackage{tikz} 
\usepackage{tikz-cd}
\usepackage[all]{xy}
\usepackage{amsthm} 
\usepackage{amsmath}
\usepackage{amssymb}
\usepackage{mathtools}
\usepackage{hyperref}
\usepackage[capitalise,noabbrev]{cleveref} 
\usepackage{geometry}
\usepackage{xcolor}
\geometry{hmargin=2.5cm,vmargin=2.5cm}
\usepackage{comment} 
\usepackage{todonotes}
\usepackage{graphicx}
\usepackage{enumitem}
\usepackage{lipsum}

\allowdisplaybreaks

\title{Cohomology Theories of Partial Groups}
\author{Sandro Pfammatter}
\address{Master’s program in Mathematics, Ecole Polytechnique F\'ed\'erale de Lausanne, EPFL, Switzerland}
\email{sandro.pfammatter@epfl.ch}

\subjclass[2020]{Primary: 55R15; Secondary: 55R35, 55N25, 55U10, 20N99, 18N50.}

\keywords{Partial groups, classifiying spaces, extensions, bundles, cohomology with twisted coefficients.}

\newtheorem{theorem}{Theorem}[section]
\newtheorem{lemma}[theorem]{Lemma}
\newtheorem{prop}[theorem]{Proposition}
\newtheorem{corollary}[theorem]{Corollary}

\theoremstyle{definition}
\newtheorem{definition}[theorem]{Definition}

\newtheorem{example}[theorem]{Example}
\newtheorem{remark}[theorem]{Remark}

\usepackage[
backend=biber,
style=alphabetic,
url=false,
isbn=false,
]{biblatex}
\addbibresource{biblio.bib}

\newcommand{\N}{{\mathbb N}}

\newcommand{\Aut}{{\rm Aut}}
\newcommand{\Out}{{\rm Out}}

\newcommand{\op}{\text{op}}

\newcommand{\DM}{\mathbb{D}(\mathcal{M})}
\newcommand{\WM}{\mathbb{W}(\mathcal{M})}
\newcommand{\MM}{\mathcal{M}}
\newcommand{\mm}{\mathbb{M}}
\newcommand{\hh}{\mathbb{H}}

\renewcommand{\phi}{\varphi}

\newcommand{\catname}[1]{{\mathbf{#1}}}
\newcommand{\Set}{\catname{Set}}

\begin{document}
	\begin{abstract}
We initiate a systematic study of cohomology theories for partial groups, algebraic structures introduced by Chermak that generalize groups by allowing only partially defined products. Inspired by classical group cohomology, we develop two parallel approaches -- an algebraic theory based on Chermak's framework and a simplicial-set-based theory using local coefficient systems -- and show that they coincide. As an application, we illustrate how the extension theory of partial groups, as developed by Broto and Gonzalez, can be interpreted and computed using our cohomology theory, including explicit examples such as extensions of free partial groups, and compare these results with classical group extensions.

	\end{abstract}
	\maketitle
	
	\tableofcontents

	\section*{Introduction}

Cohomology theories play a central role in algebra and topology. For groups, cohomology encodes extensions, obstructions, and representation-theoretic information, while more broadly serving as a fundamental tool in homotopy theory and algebraic topology. It is natural to ask how such constructions extend beyond the classical setting of groups.

Partial groups were introduced by Chermak in his work on localities \cite{Chermak13}, where they provide an algebraic framework generalizing groups by relaxing the requirement that every word of elements be multiplicatively defined. This perspective arose from the study of fusion systems and the theory of $p$–local finite groups developed by Broto, Levi, and Oliver \cite{BrotoLeviOliver}. Partial groups retain enough of the structural properties of groups to allow for a rich theory, while being flexible enough to capture phenomena that do not fit into the classical group-theoretic framework.

Broto and Gonzalez \cite{BrotoGonzalez2021} developed an alternative viewpoint on partial groups using the language of simplicial sets. In this framework, they classified extensions of partial groups, which naturally led to the appearance of cohomology groups of a partial group and thus introduced the main question addressed in this paper.

The goal here is to initiate a systematic study of cohomology theories for partial groups. Just as group cohomology admits several constructions -- via resolutions, derived functors, or classifying spaces -- we develop two parallel approaches for partial groups: one algebraic, extending Chermak’s framework, and one simplicial, based on local coefficient systems. We prove that these coincide, showing that the cohomological invariants of partial groups are robust and independent of the chosen perspective. This unification not only clarifies the foundations but also opens the way to applications.

\medskip
\noindent
{\bf \cref{thmCohomology}.}
\emph{
    Let $\mm$ be a partial group acting on an abelian group $G$ via a homomorphism $\phi\colon \mm \to \operatorname{Aut}(G)$. Let $A\colon \Pi_1 \mm \to \catname{Ab}$ be the local coefficient system induced by $\phi$ and $G$. 
    Then the generalized group cohomology and the cohomology with local coefficients coincide; that is, $$H^n_\phi(\mm, G) \cong H^n(\mm; A).$$
}

\medskip

In the final section we return to the extension theory of partial groups, following the simplicial framework of Broto and Gonzalez. Using the cohomology theory introduced earlier, we describe and compute extensions in explicit cases. As a central example, we analyze extensions of free partial groups, and then compare with the classical setting of groups, recovering the cohomological classification of group extensions. This highlights both the continuity with established theory and the genuinely new phenomena unique to partial groups.
	
\medskip
	
\noindent
{\bf Acknowledgments.}
The work presented in this paper was carried out during a summer internship at the Laboratory for Topology and Neuroscience at EPFL. I would like to thank the EPFL “Summer in the Lab” and “Student Support” programs for making this opportunity possible, as well as the “Domaine de Villette” Foundation for their support of these programs. I am deeply grateful to Jérôme Scherer for his continued encouragement and guidance throughout this work. I also wish to thank Rémi Molinier and Carles Broto for reviewing the manuscript in advance and providing valuable feedback. Furthermore, I am grateful to Garry Kaltenrieder for offering useful background material through his 2024 master’s thesis.
	\section{Preliminaries}
	\label{sectionPreliminaries}
The idea behind the concept of partial groups arises from the useful but sometimes overly restrictive structure of a group. While preserving many of the essential properties of groups, partial groups relax the requirement that a product must be defined for every collection of elements. Specifically, in a partial group, not all elements can necessarily be multiplied together. We introduce this algebraic structure, along with other necessary and related notions, and discuss different approaches to its definition.

\subsection{Simplicial Sets}

We begin with a brief discussion of simplicial sets tailored to our purposes, referring the reader to \cite{GoerssJardine} for the general theory.

Let $\Delta$ be the category whose objects are ordered sets $$[n]=\{0 <1<\ldots<n\},\, n\geq 0$$ and morphisms are nondecreasing maps $f\colon [n]\to [m]$. The morphisms are generated by following two elementary kinds of maps: $d^i\colon [n-1]\to[n],\,0\leq i\leq n$, defined by $d^i(j) = j$ if $j<i$ and $d^i(j) = j + 1$ if $j \geq i$, and $s^i\colon [n+1] \to [n]$ defined by $s^i(j) = j$ if $j \leq i$ and $s^i(j) = j-1$ if $j>i$.

\begin{definition}
    A \emph{simplicial set} is a functor $X\colon \Delta^\op \to \Set$. Equivalently, a simplicial set is a sequence of sets $\big\{X_n=X([n])\big\}_{n\geq 0}$, together with structural maps $$d_i=X(d^i)\colon X_n\to X_{n-1}, \quad \text{ and }\quad s_i=X(s^i)\colon X_n\to X_{n+1}$$ called \emph{face maps} and \emph{degeneracies} respectively, satisfying the standard \emph{simplicial identities}.
    The elements of $X_n$ are called the \emph{$n$-simplices} of $X$. Maps between simplicial sets are natural transformations $f\colon X\to Y$, in other words, collections of maps $f_n\colon X_n\to Y_n$ that commute with face maps and degeneracies.
\end{definition}

We introduce some important examples of simplicial sets.
\begin{example}
    For $n \geq 0$, the \emph{standard $n$-simplex} $\Delta[n]$ is the simplicial set defined by $\hom_\Delta(-,[n])\colon \Delta^\op \to \Set$. It is generated by a distinguished $n$-simplex $\iota_n$, corresponding to $\mathrm{id}_{[n]}$, with all other simplices obtained from $\iota_n$ via face and degeneracy maps. 

    Moreover, for any simplicial set $X$ and $x \in X_n$, the simplex $x$ can be identified with the simplicial map $x\colon \Delta[n] \to X$ sending $\iota_n$ to $x$.
\end{example}

\begin{example}
    Let $\catname{C}$ be a small category. We define its nerve $\operatorname{Ner}(\catname{C})$ as the simplicial set with $n$-simplices the set of sequences of composable morphisms of length $n$. The face maps are given by
    $$d_i\big(c_0\xrightarrow{f_1}c_1\xrightarrow{f_2}\cdots\xrightarrow{f_n}c_n\big)=\begin{cases}
        \big(c_1\xrightarrow{f_2}c_2\xrightarrow{f_3}\cdots\xrightarrow{f_n}c_n\big) &i=0\\ \big(c_0\xrightarrow{f_1}\cdots c_{i-1}\xrightarrow{f_{i+1}f_i}c_{i+1}\cdots \xrightarrow{f_n}c_n\big) &1\leq i\leq n-1\\ \big(c_0\xrightarrow{f_1}c_1\xrightarrow{f_2}\cdots\xrightarrow{f_{n-1}}c_{n-1}\big) &i=n.
    \end{cases}$$
    The degeneracies are given by $$s_i\big(c_0\xrightarrow{f_1}c_1\xrightarrow{f_2}\cdots\xrightarrow{f_n}c_n\big)=\big(c_0\xrightarrow{f_1}c_1\xrightarrow{f_2}\cdots c_i\xrightarrow{\operatorname{Id}}c_i\cdots\xrightarrow{f_n}c_n\big).$$
\end{example}

\begin{example}\label{exCartesianProductSimplicalSets}
    Let $X$ and $Y$ be simplicial sets. The cartesian product of $X$ and $Y$, denoted by $X\times Y$, is the simplicial set with $n$-simplices $(X\times Y)_n=X_n\times Y_n$. Face and degeneracy maps are defined componentwise.
\end{example}

We now formalize the notion of homotopy in the simplicial setting.

\begin{definition}
    Let $X$ and $Y$ be simplicial sets, and let $f,g\colon Y\to X$ be simplicial maps. A \emph{homotopy from $g$ to $f$} is a map $$F\colon Y\times \Delta[1]\to X$$ such that $f=F\circ (\operatorname{Id}\times v_0)$ and $g=F\circ (\operatorname{Id}\times v_1)$, where $v_0=d_1(\iota_1)$ and $v_1=d_0(\iota_1)$ are the two vertices of $\Delta[1]$.
\end{definition}

The following operators, introduced in \cite{BrotoGonzalez2021}, are of special interest to us.

\begin{definition}
    Let $X$ be a simplicial set. For each $1\leq i \leq n$, let $e_i^n\colon X\to X_1$ be the \emph{$i$-th edge map} induced by the nondecreasing function $[1]\to [n]$ with image $i-1, i$.
    \begin{itemize}
        \item The \emph{spine operator} $\boldsymbol{e}^n$, $n\geq 1$, is defined as $$\boldsymbol{e}^n=(e_1^n,e_2^n,\ldots,e_n^n)\colon X_n\to X_1\times \cdots \times X_1.$$
        \item The \emph{back edge} or \emph{product operator} $\Pi^n$, $n\geq 1$, is the map $$\Pi^n\colon X_n\to X_1,$$ induced by the function $[1]\to [n]$ with image $\{0,n\}$. Equivalently, $\Pi^1=\text{Id}$ and $\Pi^n=d_1\circ d_1\circ\cdots\circ d_1$, $n-1$ times. We simply write $\Pi=\Pi^n$ when there is no possible confusion about the degree $n$.
    \end{itemize}
\end{definition}

We now establish the notion of opposite simplicial sets by reordering the structural maps.

\begin{definition}
Let $X$ be a simplicial set. The \emph{opposite simplicial set} $X^\op$ is defined by setting $X^\op_n \coloneqq X_n$ for all $n \geq 0$, with face maps $$d_i^\op \coloneqq d_{n-i} \colon X_n \to X_{n-1}$$ and degeneracy maps $$s_i^\op \coloneqq s_{n-i} \colon X_n \to X_{n+1}$$ for each $0 \leq i \leq n$. This reindexing reverses the simplicial structure while preserving all simplicial identities.
\end{definition}

\subsection{Partial Groups}
We now introduce the concept of a partial group, first following Chermak’s approach \cite{Chermak13}, and then turning to the simplicial perspective of Broto and Gonzalez \cite{BrotoGonzalez2021}.

\subsubsection{Chermak's Partial Groups}

Let $\MM$ be a nonempty set. We consider the free monoid $\WM$ on the set $\MM$. An element in $\WM$ is a finite sequence of elements of $\MM$. We represent a word $u\in \mathbb{W}(\MM)$ formed by the elements $x_1,\ldots,x_n\in\MM$ by the symbol $$u=(x_1,\ldots,x_n).$$ Given words $u=(x_1,\ldots,x_n)$ and $v=(y_1,\ldots,y_m)$ in $\mathbb{W}(\MM)$ the concatenation will be abbreviated by $$u\circ v =(x_1,\ldots,x_n,y_1,\ldots,y_m).$$
We further associate to $\mathbb{W}(\MM)$ a length function $l\colon\mathbb{W}(\MM)\to \N$ which maps each element in $\mathbb{W}(\MM)$ to the length of the sequence of element of $\MM$ which forms it. There is a unique word of length $0$, namely the empty word $(\emptyset)$. We naturally identify the set $\MM$ with the words of length $1$ in $\mathbb{W}(\MM)$.

\begin{definition}\label{dfnPartialMonoid}
    Let $\MM$ be a set, and let $\DM\subseteq\mathbb{W}(\MM)$ be a subset such that
    \begin{enumerate}[label = (\roman*)]
        \item $\MM\subseteq \DM$;
        \item if $u\circ v\in \DM$ then $u,v\in \DM$.
    \end{enumerate}
    A mapping $\Pi\colon\DM\to\MM$ is a \emph{product} if
    \begin{enumerate}[label=(P\arabic*)]
        \item\label{axiomP1} $\Pi$ restricts to the identity on $\MM$;
        \item\label{axiomP2} if $u\circ v\circ w\in \DM$ then $u\circ \Pi(v)\circ w\in \DM$ and $$\Pi(u\circ v \circ w)=\Pi\big(u\circ \Pi(v)\circ w\big).$$
    \end{enumerate}
    The \emph{unit} of a product $\Pi$ is defined as $1\coloneqq\Pi(\emptyset)$. A \emph{partial monoid} is a triple $\big(\MM,\DM,\Pi\big)$, where $\Pi$ is a product defined on $\DM$.
\end{definition}

A monoid $(M,*,1)$ can be regarded as a partial monoid by taking the domain of definition to be the set of all words, $\mathbb{D}(M)=\mathbb{W}(M)$, with the product map given by the usual multiplication.

\begin{definition}\label{dfnPartialGroupChermark}
    Let $\big(\MM,\DM,\Pi\big)$ be a partial monoid.
    An \emph{inversion} on $\MM$ is an involutory bijection $x\mapsto x^{-1}$ on $\MM$ together with the associated mapping $u\mapsto u^{-1}$ on $\WM$ given by $$u=(x_1,\ldots,x_n)\mapsto (x_n^{-1},\ldots,x_1^{-1})=u^{-1}.$$
    A partial group is a partial monoid $\big(\MM,\DM,\Pi\big)$ together with an inversion such that
    \begin{enumerate}[label=(I\arabic*)]
        \item\label{axiomI1} if $u\in \DM$ then $u^{-1}\circ u\in \DM$ and $\Pi(u^{-1}\circ u)=1$.
    \end{enumerate}
\end{definition}
To simplify the notation, we often use $\MM$ to refer to a partial group $\big(\MM,\DM,\Pi,(-)^{-1}\big)$. Partial groups enjoy group-like properties (associativity on the domain of definition, units, inverses, cancellation), see \cite[Lemma 2.2]{Chermak13} for details.

Furthermore, we note that if $\MM$ is a partial group with $\DM = \WM$, then $\MM$ admits the structure of a group under the binary operation $$(u,v)\mapsto \Pi(u\circ v).$$ Conversely, any group $G$ can be viewed as a partial group by setting $\mathbb{D}(G) = \mathbb{W}(G)$ and defining the product and inversion maps via those of $G$.

Just as with groups, there is a natural notion of a homomorphism between partial groups that preserves their algebraic structure.

\begin{definition}
    Let $\big(\MM,\DM,\Pi,(-)^{-1}\big)$ and $\big(\MM',\mathbb{D}(\MM'),\Pi',(-)^{-1}\big)$ be partial groups. A mapping $\beta\colon \MM\to \MM'$ is called a \emph{morphism of partial groups} if
    \begin{enumerate}[label=(\roman*)]
        \item $\beta^*\big(\DM\big)\subseteq \mathbb{D}(\MM')$; and
        \item $\beta\big(\Pi(u)\big)=\Pi'\big(\beta^*(u)\big)$ for all $u\in \DM$,
    \end{enumerate}
    where $\beta^*\colon\WM\to \mathbb{W}(\MM')$ is the map induced by $\beta$. The homomorphism $\beta$ is an \emph{isomorphism} if there is a homomorphism of partial groups $\beta'\colon \MM'\to \MM$ such that $\beta\circ\beta'=\text{Id}_{\MM}$ and $\beta'\circ \beta=\text{Id}_{\MM'}$.
\end{definition}

Morphisms of partial groups satisfy properties analogous to those of group homomorphisms: they preserve both units and inverses, see \cite[Lemma 3.2]{Chermak13}.

Partial groups with their homomorphisms constitute a category $\catname{Part}$, with composition and identities defined in the natural way. The category of groups forms a full subcategory of $\catname{Part}$.

We now examine a fundamental example in the theory of partial groups: the free partial group construction over a pointed set. This construction was introduced and studied in detail by Salati \cite[Section 1.1]{Salati2023}.

\begin{example}\label{exFreePartialGroups}
    Let $(X,1)$ be a pointed set and define $Y \coloneqq \{1\} \sqcup X^* \sqcup \widetilde{X}^*$, where $X^* = X \setminus \{1\}$ and $\widetilde{X}^*$ denotes a distinct copy of $X^*$. We equip $Y$ with an involutory bijection $i$ defined by $i(1) = 1$ and $i(x) = \tilde{x}$ for $x \in X^*$. 
    
    The domain $\mathbb{D}(Y) \subseteq \mathbb{W}(Y)$ consists of words formed by alternating finite strings $$(\ldots,x,\tilde{x},x,\tilde{x},\ldots)$$ for some $x \in X^*$, with any finite number of $1$'s inserted at any position. We say such words are built on $x \in X^*$. The product map $\Pi \colon \mathbb{D}(Y) \to Y$ is defined for $u \in \mathbb{D}(Y)$ built on $x \in X^*$ by $$\Pi(u)=\begin{cases}
            x &\text{if the number of $x$ is greater than that of $\tilde{x}$}\\
            1 &\text{if the number of $x$ equals that of $\tilde{x}$}\\ \tilde{x} & \text{if the number of $x$ is lower than that of $\tilde{x}$.}
        \end{cases}$$
    This construction yields a partial group $\mathbb{X} = \big(Y, \mathbb{D}(Y), \Pi, i\big)$.

    We can extend to a functor which is left adjoint to the forgetful functor $U \colon \catname{Part} \to \Set_*$, defined by $U(\MM) = (\MM, 1_\MM)$.

    We may also consider the precomposition with the functor $\Set \to \Set_*$ that assigns to a set $X$ the pointed set $\big(X \sqcup \{1\},1\big)$. The resulting functor is left adjoint to the forgetful functor $U \colon \catname{Part} \to \Set$ given by $U(\MM) = \MM$.
\end{example}

\subsubsection{Simplicial Description of Partial Groups}
We now introduce the concept of a partial group from the simplicial perspective following Broto and Gonzalez in \cite{BrotoGonzalez2021} and explain its equivalence with Chermak's earlier definition.

\begin{definition}\label{dfnPartialMonoidBroto}
    A \emph{partial monoid} is a nonempty simplicial set $\mm$ satisfying
    \begin{enumerate}[label=(PM\arabic*)]
        \item $\mm$ is reduced, i.e., $\mm_0$ consists of a unique vertex;
        \item the spine operator $\boldsymbol{e}^n\colon \mm_n\to (\mm_1)^n$ is injective for all $n\geq 1$.
    \end{enumerate}
\end{definition}

We easily see that this definition is equivalent to Definition~\ref{dfnPartialGroupChermark}. Indeed using Chermak's notation, we can regard the partial monoid $\mm$ as having underlying set $\MM=\mm_1$, the set of edges of $\mm$, and the domain of multiplication is given by $$\mathbb{D}=\big\{(\emptyset)\big\}\cup \bigsqcup_{n\geq 1}\big\{(x_1,\ldots,x_n)\in (\mm_1)^n \, \vert \, \exists\sigma\in \mm_n, \boldsymbol{e}^n(\sigma)=(x_1,\ldots,x_n)\big\}.$$
The multiplication $\Pi$ corresponds to the product operator of $\mm$, and the unit is $1=s_0(*)$, where $*$ is the unique vertex of $\mm$. Conversely, given a partial monoid $\MM$ defined as in Definition~\ref{dfnPartialGroupChermark}. We define a simplicial set $\mm$ with $\mm_0=\{*\}$ and for $n\geq 1$, $$\mm_n=\big\{u\in \DM\,\vert\,l(u)=n\big\}.$$ We define the face maps $d_i\colon\mm_n\to\mm_{n-1}$ by
$$d_i\big((x_1,\ldots,x_n)\big)=\begin{cases} \big(x_2,\ldots,x_n\big) &i=0\\\big(x_1,\ldots,\Pi(x_i,x_{i+1}),\ldots,x_n\big) &1\leq i\leq n-1 \\ \big(x_1,\ldots,x_{n-1}\big) &i=n\end{cases}$$
and the degeneracies $s_i\colon \mm_n\to \mm_{n+1}$ by $$s_i\big((x_1,\ldots,x_n)\big)=(x_1,\ldots,x_i,1,x_{i+1},\ldots,x_n).$$
The axiom~\ref{axiomP2} ensures that both the face maps $d_i$ and degeneracy maps $s_i$ are well-defined, guaranteeing that
$$d_i\big((x_1,\ldots,x_n)\big) \in \mm_{n-1} \quad \text{and} \quad s_i\big((x_1,\ldots,x_n)\big) \in \mm_{n+1}$$
for all valid indices $i$ and tuples $(x_1,\ldots,x_n) \in \mathbb{M}_n$.
One easily verifies that this yields a reduced simplicial set with injective spine operator, i.e., a partial monoid as in Definition~\ref{dfnPartialMonoidBroto}

The above correspondence suggests the following notational conventions introduced in \cite{BrotoGonzalez2021}. For a partial monoid $\mm$, a simplex $x\in \mm_n$ with spine $\boldsymbol{e}^n(x)=(x_1,\ldots,x_n)$ is written $$x=[x_1|x_2|\ldots|x_n].$$ Concerning the multiplication, we write $1=s_0(*)$, where $\mm_0=\{*\}$, and $$\Pi[x_1|\ldots|x_n]=x_1\cdot x_2\cdot \ldots \cdot x_n\in \mm_1.$$

\begin{definition}
    An \emph{inversion} in a partial monoid $\mm$ is an anti-involution $\nu\colon \mm\to\mm$, i.e., a simplicial map $\nu\colon\mm\to \mm^\op$ with inverse $\nu^{-1}=\nu^\op$, satisfying
    \begin{enumerate}[label=(I\arabic*)]
        \item if $u\in \mm_n$ with $n\geq1$ then $[\nu(u)|u]\in \mm_{2n}$ and $\Pi[\nu(u)|u]=1$.
    \end{enumerate}
    A \emph{partial group} is a partial monoid together with an inversion.
\end{definition}

Continuing with our notational conventions, we won't usally specify the map $\nu$ and instead, we write $u^{-1}=\nu(u)$. The simplicial identities imply that $[u_1|\ldots|u_n]^{-1}=[u_n^{-1}|\ldots|u_1^{-1}]$.

The equivalence between different notions of partial monoids established above naturally extends to partial groups.

\subsubsection{Reflective Subcategory of Groups Inside Partial Groups}
As previously mentioned, Chermak's definition allows us to interpret any group $G$ as a partial group in a canonical way. From the simplicial perspective, this corresponds to taking the nerve of the one-object groupoid $\mathcal{B}G$ associated to $G$. This nerve construction $\operatorname{Ner}(\mathcal{B}G)$ naturally inherits a partial group structure from the group operations of $G$. This construction yields what is commonly called the \emph{bar resolution} of the group $G$ and denoted by $BG$.

Lemoine and Molinier proved in \cite{LemoineMolinier} that the full subcategory of groups in $\catname{Part}$ is reflective. The corresponding localization functor is the fundamental group functor, obtained by viewing partial groups as a full subcategory of the category of simplicial sets. A natural question arises: with respect to which set of morphisms is this localization induced? 

We denote by $F_P(n)$ the free partial group on $n$ generators and by $F_G(n)$ the free group on $n$ generators. There are natural inclusion morphisms $\iota_n \colon F_P(n) \to F_G(n)$ induced by the identity on the generators. Let $S = \big\{\iota_n \colon F_P(n) \to F_G(n) \mid n \in \mathbb{N}\big\}.$
We can show that a partial group is $S$-local if and only if it is a group. This follows directly from the observation that partial group homomorphisms from $F_P(n)$ to a partial group $\MM$ are determined by $n$-tuples of elements in $\MM$, whereas homomorphisms from $F_G(n)$ to $\MM$ correspond to $n$-tuples whose free monoid lies within the domain of definition of the product on $\MM$.
    
We further observe that it is not sufficient to fix $\iota_n$ for a single $n \in \mathbb{N}$; that is, there exists a partial group which is $\iota_n$-local but not a group. For example, consider the partial group defined on $\MM = F_G(n+1)$ with domain $$\mathbb{D}(\MM) = \bigcup_{H \in \mathcal{H}} \mathbb{W}(H),$$ where $\mathcal{H}$ is the set of subgroups of $F_G(n+1)$ generated by at most $n$ elements. Using the product and inversion from $F_G(n+1)$, this defines a partial group. It is $\iota_n$-local, since for every $n$-tuple in $F_G(n+1)$, the free monoid on this $n$-tuple lies in the domain of definition. However, $\MM$ is not a group, because the $(n+1)$-tuple of generators $(a_1,\ldots,a_{n+1})$ does not lie in $\mathbb{D}(\MM)$.

\subsubsection{Homotopies and Normalizers}

We now examine homomorphisms and their homotopies in the category of partial groups, which naturally lead to the notions of automorphism groups and outer automorphism groups. Our exposition follows the treatment of Broto and Gonzalez \cite{BrotoGonzalez2021}.

The behavior of homotopies between partial group homomorphisms is especially tractable, as demonstrated by the following lemma.

\begin{lemma}
    Let $f,g\colon \hh\to\mm$ be two homomorphisms of partial monoids. Then, the following holds.
    \begin{enumerate}[label=(\roman*)]
        \item A homotopy $F\colon\hh\times\Delta[1] \to \mm$ from $g$ to $f$ is uniquely determined by the $1$-simplex $\eta\colon F(1,\iota_1)\in \mm_1$, which satisfies $\eta\cdot g(x)=f(x)\cdot\eta$, for all $x\in \hh_1$.
        \item A simplex $\eta\in \mm_1$ determines a homotopy from $g$ to $f$ if and only if, for each simplex $[x_1|\ldots|x_n]\in\hh$, the following conditions are satisfied:
        \begin{enumerate}[label= (\alph*)]
            \item $\omega_k=[f(x_1)|\ldots|f(x_k)|\eta|g(x_{k+1})|\ldots|g(x_n)]\in\mm$ for each $k=0,\ldots,n$; and
            \item $\Pi(\omega_0)=\Pi(\omega_1)=\ldots=\Pi(\omega_n)$
        \end{enumerate}
    \end{enumerate}
    
    In addition, if $\mm$ and $\hh$ are partial groups, then the following holds.
    \begin{enumerate}[label=(\roman*)]
        \setcounter{enumi}{2}
        \item A homotopy $F\colon\hh \times \Delta[1]\to\mm$ is uniquely determined by either one of the functions, $F_0=F|_{\hh\times \{v_0\}}$ or $F_1=F|_{\hh\times\{v_1\}}$, together with $\eta=F(1,\iota_1)\in\mm_1$.
    \end{enumerate}
\end{lemma}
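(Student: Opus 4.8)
The plan is to prove the three assertions in the order stated, exploiting the fact that a homotopy $F\colon \hh\times\Delta[1]\to\mm$ is a simplicial map out of a product whose second factor is $\Delta[1]$, so that $F$ is completely determined by its values on nondegenerate simplices of $\hh\times\Delta[1]$. Recall that the nondegenerate $n$-simplices of $\Delta[1]$ are the two vertices $v_0,v_1$ and, for each $n\geq 1$, the $n{+}1$ monotone surjections $[n]\to[1]$; concretely a nondegenerate $n$-simplex of $\hh\times\Delta[1]$ lying over a nondegenerate simplex of $\hh$ has the form $\big(\sigma, \tau_k\big)$ where $\tau_k$ is the shuffle of $\Delta[1]$ that is $0$ on $\{0,\ldots,k\}$ and $1$ on $\{k,\ldots,n\}$. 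The strategy throughout is to translate ``$F$ is a well-defined simplicial map'' into the bar-notation conditions (a) and (b), using the simplicial identities and the description of faces and degeneracies of a partial monoid recorded in the excerpt.

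For part (i), I would first set $\eta\coloneqq F(1_{\hh},\iota_1)\in\mm_1$, where $1_{\hh}$ is the $1$-simplex $s_0(*)$ of $\hh$ and $\iota_1$ the nondegenerate edge of $\Delta[1]$. The claim $\eta\cdot g(x)=f(x)\cdot\eta$ for all $x\in\hh_1$ is extracted by evaluating $F$ on the unique nondegenerate $2$-simplex of $\big([x]\times\Delta[1]\big)$ and computing its two relevant faces via the face-map formula $d_i[y_1|y_2]=[\Pi(y_1,y_2)]$ for $i=1$: one shuffle yields $[f(x)|\eta]$ with product $f(x)\cdot\eta$ and the complementary shuffle yields $[\eta|g(x)]$ with product $\eta\cdot g(x)$; since both are faces of the image of a single $2$-simplex mapping to the same edge of $\mm$, the two products agree. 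To see that $\eta$ determines $F$ entirely, I would argue by induction on $n$ that the value of $F$ on any nondegenerate simplex $(\sigma,\tau_k)$ is forced: its spine consists of edges each of which is either $f$ of an edge of $\sigma$, $g$ of an edge of $\sigma$, or the single copy of $\eta$ sitting at the ``jump'' position $k$, and because $\mm$ is a partial monoid its simplices are determined by their spines (axiom (PM2), the injectivity of $\boldsymbol{e}^n$). This is exactly the content of the formula $\omega_k=[f(x_1)|\ldots|f(x_k)|\eta|g(x_{k+1})|\ldots|g(x_n)]$, so the simplices $\omega_k$ are precisely the images under $F$ of the shuffles.

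For part (ii), having identified $F(\sigma,\tau_k)=\omega_k$, the two directions become bookkeeping. If $\eta$ comes from a genuine homotopy then each $\omega_k$ must be a bona fide simplex of $\mm$, giving (a), and the faces of the prism $\big(\sigma\times\Delta[1]\big)$ that collapse the $\Delta[1]$-direction force all the back-edge products $\Pi(\omega_k)$ to coincide with the common edge $\Pi(\sigma)$-image, giving (b). Conversely, given $\eta$ satisfying (a) and (b) I would define $F$ on nondegenerate simplices by the formula $F(\sigma,\tau_k)=\omega_k$ (and on degenerate simplices by the degeneracy relations) and then verify compatibility with all face and degeneracy maps; condition (a) guarantees the $\omega_k$ exist in $\mm$, condition (b) guarantees the faces in the $\Delta[1]$-direction match up coherently, and the remaining simplicial identities follow from those in $\hh$ together with axiom \ref{axiomP2}. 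I expect this verification — checking that the assignment $(\sigma,\tau_k)\mapsto\omega_k$ respects \emph{every} face map, in particular the inner faces $d_i$ which merge adjacent spine entries via $\Pi$ — to be the main obstacle, since one must confirm that applying a face operator to $\omega_k$ lands on either $\omega_{k-1}$, $\omega_k$, or a shuffle of a face of $\sigma$, and that the two descriptions agree; this is where the associativity axiom \ref{axiomP2} does the real work.

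For part (iii), in the partial-group case each edge is invertible, so the restriction $F_0=F|_{\hh\times\{v_0\}}$ equals $f$ and $F_1=F|_{\hh\times\{v_1\}}$ equals $g$ up to the bookkeeping of $\eta$. Knowing $F_0$ together with $\eta$, I would recover $F_1$ (equivalently $g$) edgewise from the relation $\eta\cdot g(x)=f(x)\cdot\eta$ of part (i), solving $g(x)=\Pi[\eta^{-1}|f(x)|\eta]$ using the inversion axiom \ref{axiomI1} to form $\eta^{-1}$ and the fact that the relevant length-three word lies in $\DM$ because it is a face-configuration of an $\omega_k$ already shown to exist; symmetrically $f$ is recovered from $F_1$ and $\eta$. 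Since both $f$ and $g$ together with $\eta$ determine $F$ by part (i), either one of $F_0,F_1$ plus $\eta$ suffices, which is the assertion.
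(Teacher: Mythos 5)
The paper does not prove this lemma itself; it simply cites \cite[Lemma 2.5]{BrotoGonzalez2021}. Your strategy --- decompose the prism $\hh\times\Delta[1]$ into its shuffles, observe that the spine of the image of the $k$-th shuffle over $[x_1|\ldots|x_n]$ is $(f(x_1),\ldots,f(x_k),\eta,g(x_{k+1}),\ldots,g(x_n))$, and invoke injectivity of the spine operator to conclude that $F$ is forced by $\eta$ --- is exactly the argument of the cited proof, and parts (i) and (ii) are essentially sound as sketched. Two bookkeeping slips worth fixing: the square $\Delta[1]\times\Delta[1]$ has \emph{two} nondegenerate $2$-simplices, not one (you need both, sharing the diagonal edge $(x,\iota_1)$, to extract $f(x)\cdot\eta=\eta\cdot g(x)$ --- you do in fact use both shuffles a sentence later, so this is only a wording inconsistency); and the top-dimensional simplices of the prism over $\sigma\in\hh_n$ are $(s_k\sigma,\tau_k)$, not $(\sigma,\tau_k)$, since the first coordinate must be degenerated to reach dimension $n+1$.

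The one step whose justification fails as written is in part (iii). You recover $g(x)$ as $\Pi[\eta^{-1}|f(x)|\eta]$ and claim the length-three word lies in $\DM$ ``because it is a face-configuration of an $\omega_k$ already shown to exist.'' It is not: for a $1$-simplex $x$ the only $\omega_k$'s are $[\eta|g(x)]$ and $[f(x)|\eta]$, and neither contains $(\eta^{-1},f(x),\eta)$ as a sub-spine; moreover axiom (P2) only allows contracting words already in $\DM$, never expanding them, so membership of $(\eta^{-1},f(x),\eta)$ in $\DM$ does not follow from what you have. The correct route is cancellation: from $[\eta|g(x)]\in\mm_2$ and axiom (I1) one gets $[g(x)^{-1}|\eta^{-1}|\eta|g(x)]\in\mm_4$, hence $[\eta^{-1}|\eta|g(x)]\in\mm_3$ since $\DM$ is closed under subwords, and (P2) then gives
$$g(x)=\Pi\big[\Pi(\eta^{-1},\eta)\,\big|\,g(x)\big]=\Pi\big[\eta^{-1}\,\big|\,\Pi(\eta,g(x))\big]=\eta^{-1}\cdot\big(f(x)\cdot\eta\big),$$
which is determined by $f(x)$ and $\eta$. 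With that repair (and its mirror image for recovering $f$ from $g$ and $\eta$), part (iii) goes through, since a homomorphism into a partial monoid is determined by its values on $1$-simplices.
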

\begin{proof}
    A proof of this result can be found in \cite[Lemma 2.5]{BrotoGonzalez2021}.
\end{proof}

For two partial group homomorphisms $f,g\colon \hh \to \mm$, we write $$f \xleftarrow{\eta} g$$ to denote a homotopy from $g$ to $f$ determined by an element $\eta \in \mm_1$, as established in the preceding lemma. Furthermore, since $\eta$ and $f$ determine $g$, we write sometimes $g=f^\eta$. Likewise, $f={}^\eta g$. 

Given an element $\eta \in \mm_1$ inducing a homotopy from $\operatorname{Id}_\mm$ to some endomorphism, we adopt the notation $c_\eta \coloneqq {}^\eta\operatorname{Id}_\mm$.

\begin{definition}
    The \emph{normalizer} $N(\mm)$ of a partial group $\mm$ is the set of elements $\eta\in \mm_1$ that define a homotopy of the identity on $\mm$:
    $$N(\mm)=\{\eta\in\mm_1\mid \exists c_\eta \xleftarrow{\eta} \operatorname{Id}_\mm \}.$$
    The \emph{center} of $\mm$ is the subset $Z(\mm)\subseteq N(\mm)$ of elements $\eta$ defining a self-homotopy of the identity, i.e., $c_\eta=\operatorname{Id}_\mm$.
\end{definition}

For any $\eta\in N(\mm)$, the map $c_\eta$ can be seen as a standard conjugation map. Indeed, $c_\eta$ is a well-defined automorphism, characterized by the formula $c_\eta(x)=\eta\cdot x \cdot \eta^{-1}$, for each $x\in \mm_1$, as shown below. It turns out that $N(\mm)$ is a group and $c$ defines a group homomorphism $c\colon N(\mm)\to \operatorname{Aut}(\mm)$. By analogy with the case of groups, the automorphisms in the image of $c$ are called inner, and the cokernel is defined to be the outer automorphisms. We formalize this idea in the following fundamental exact sequence.

\begin{prop}\label{propExactSequenceAutomorphismGroups}
    Let $\mm$ be a partial group. Then, $N(\mm)$ is a subgroup of $\mm$, $Z(\mm)$ is an abelian subgroup of $N(\mm)$, and there is an exact sequence $$1\to Z(\mm)\to N(\mm) \xrightarrow{c} \operatorname{Aut}(\mm)\to \operatorname{Out}(\mm)\to 1,$$ where $c\colon N(\mm)\to \operatorname{Aut}(\mm)$ is the map that sends $\eta\in N(\mm)$ to $c_\eta\in\operatorname{Aut}(\mm)$ and $\operatorname{Out}(\mm)$ is the set of homotopy classes of automorphisms of $\mm$. In particular, $N(\mm)$ and $Z(\mm)$ are invariant by automorphisms of $\mm$, and $\operatorname{Out}(\mm)$ is a group.
\end{prop}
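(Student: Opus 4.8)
The plan is to mirror the classical argument that inner automorphisms form a normal subgroup, using the preceding homotopy lemma as the dictionary between elements $\eta\in\mm_1$ and homotopies of endomorphisms. The one genuinely new feature compared to groups is that products are partial, so at every step I must certify that the relevant words actually lie in the domain, i.e.\ that the corresponding simplices exist in $\mm_n$. The homotopy lemma is built for exactly this: its condition~(a) produces the simplices I need for free, so the strategy is to route every definedness claim through condition~(a) rather than manipulate products by hand. I first record the basic dictionary: for $\eta\in N(\mm)$, part~(i) of the lemma gives $\eta\cdot x=c_\eta(x)\cdot\eta$ for all $x\in\mm_1$, while condition~(a) (with $n=1$, $k=0,1$) yields $[\eta|x]\in\mm_2$ and $[c_\eta(x)|\eta]\in\mm_2$. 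Since $c_\eta={}^\eta\operatorname{Id}_\mm$ is the restriction of a homotopy it is automatically a homomorphism, and combining the relation with $\eta\cdot\eta^{-1}=1$ (axiom (I1) applied to $\eta^{-1}$), associativity and cancellation (Chermak's Lemma~2.2) gives the conjugation formula $c_\eta(x)=\eta\cdot x\cdot\eta^{-1}$.

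Next I would establish the group structure on $N(\mm)$. The unit $1=s_0(*)$ defines the constant homotopy, so $1\in N(\mm)$ and $c_1=\operatorname{Id}_\mm$. For closure, given $\eta,\zeta\in N(\mm)$, condition~(a) for $\eta$ applied to the $1$-simplex $\zeta$ gives $[\eta|\zeta]\in\mm_2$, so $\eta\zeta$ is defined; composing the homotopies $c_\eta c_\zeta\xleftarrow{\eta}c_\zeta$ and $c_\zeta\xleftarrow{\zeta}\operatorname{Id}_\mm$ then produces $c_\eta c_\zeta\xleftarrow{\eta\zeta}\operatorname{Id}_\mm$, so $\eta\zeta\in N(\mm)$ and $c_{\eta\zeta}=c_\eta\circ c_\zeta$. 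For inverses I would reverse the $\Delta[1]$-direction of the homotopy realizing $\eta$: this yields a homotopy with connecting $1$-simplex $\eta^{-1}$, whose condition~(a) forces $[x|\eta^{-1}]\in\mm_2$ for all $x$ and shows that $x\mapsto\eta^{-1}x\eta$ is a well-defined homomorphism, necessarily two-sided inverse to $c_\eta$; hence $\eta^{-1}\in N(\mm)$, $c_{\eta^{-1}}=c_\eta^{-1}$, and in particular $c_\eta\in\operatorname{Aut}(\mm)$ (injectivity also follows directly from the formula plus cancellation). Finally, to see that $N(\mm)$ is a genuine subgroup and not merely a closed subset with partial products, I argue by induction that every word $[\eta_1|\cdots|\eta_k]$ with $\eta_i\in N(\mm)$ lies in $\mm_k$: assuming $[\eta_2|\cdots|\eta_k]\in\mm_{k-1}$, condition~(a) for $\eta_1$ applied to this simplex gives $[\eta_1|\eta_2|\cdots|\eta_k]\in\mm_k$, and its product equals $\eta_1\cdot\Pi[\eta_2|\cdots|\eta_k]\in N(\mm)$ by axiom (P2) and closure.

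It then remains to treat $c$, $Z(\mm)$ and the exactness. The identity $c_{\eta\zeta}=c_\eta\circ c_\zeta$ makes $c\colon N(\mm)\to\operatorname{Aut}(\mm)$ a homomorphism, and by definition $Z(\mm)=\{\eta\mid c_\eta=\operatorname{Id}_\mm\}=\ker c$, which is therefore a subgroup; it is abelian because its elements satisfy $\eta x\eta^{-1}=x$, hence commute with one another. Invariance under automorphisms I would obtain by transporting homotopies: for $\alpha\in\operatorname{Aut}(\mm)$ and $\eta\in N(\mm)$ realized by a homotopy $F$, the map $\alpha\circ F\circ(\alpha^{-1}\times\operatorname{Id})$ is a homotopy of $\operatorname{Id}_\mm$ with connecting $1$-simplex $\alpha(\eta)$ (using $\alpha^{-1}(1)=1$), showing $\alpha(\eta)\in N(\mm)$ with $c_{\alpha(\eta)}=\alpha\circ c_\eta\circ\alpha^{-1}$. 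This simultaneously shows $Z(\mm)=\ker c$ is preserved and that $\operatorname{im}(c)$ is normal in $\operatorname{Aut}(\mm)$, so $\operatorname{Out}(\mm)=\operatorname{Aut}(\mm)/\operatorname{im}(c)$ is a group.

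For exactness it suffices to identify $\operatorname{im}(c)$ with $\ker\big(\operatorname{Aut}(\mm)\to\operatorname{Out}(\mm)\big)$, the other spots being the injective inclusion at $Z(\mm)$, the equality $\ker c=Z(\mm)$ at $N(\mm)$, and the surjective quotient at $\operatorname{Out}(\mm)$. To this end I would check that two automorphisms $f,g$ are homotopic precisely when they differ by an inner automorphism: a homotopy $f\xleftarrow{\eta}g$ rewrites, via $y=g(x)$, as $\eta\cdot y=(fg^{-1})(y)\cdot\eta$, so $\eta\in N(\mm)$ and $f=c_\eta\circ g$, and conversely any $f=c_\eta\circ g$ with $\eta\in N(\mm)$ yields such a homotopy. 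Thus homotopy classes of automorphisms are exactly the cosets of $\operatorname{im}(c)$, giving the desired exactness. The main obstacle throughout is not the algebra, which is formally identical to the group case, but the constant need to certify definedness of each product; the cleanest way to avoid ad hoc checks is to deduce every such claim from condition~(a) of the homotopy lemma and from the functoriality of homotopies under composition, reversal, and conjugation by $\alpha$.
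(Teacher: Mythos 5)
The paper does not give its own argument here: it defers entirely to \cite[Proposition 2.11]{BrotoGonzalez2021}, and your proposal follows essentially the same route as that reference (conjugation formula via the $1$-simplex relation $\eta\cdot x=c_\eta(x)\cdot\eta$, closure of $N(\mm)$ certified through condition~(a), $Z(\mm)=\Ker c$, transport of homotopies by automorphisms, and identification of homotopy classes with cosets of the inner automorphisms). The one place where you assert more than you have in hand is the ``functoriality of homotopies under composition and reversal'': a partial group is not in general a Kan complex, so vertical composition and reversal of homotopies are \emph{not} automatic and must themselves be verified through criterion~(ii) of the homotopy lemma -- that is, you must check that the words $[f(x_1)|\ldots|f(x_k)|\eta\cdot\zeta|x_{k+1}|\ldots|x_n]$ (resp.\ the analogous words for $\eta^{-1}$) lie in $\mm$ and have constant product, which is where the real work of the Broto--Gonzalez proof sits. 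Relatedly, to conclude $\eta^{-1}\in N(\mm)$ from the reversed homotopy $\operatorname{Id}\xleftarrow{\eta^{-1}}c_\eta$ you must precompose with $c_\eta^{-1}$, so you should establish that $c_\eta$ is an automorphism (via the two-sided inverse $x\mapsto\eta^{-1}x\eta$ and cancellation) \emph{before} invoking that step. With those two verifications written out, the argument is complete and matches the cited source.
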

\begin{proof}
    A proof of this result can be found in \cite[Proposition 2.11]{BrotoGonzalez2021}.
\end{proof}

\begin{remark}
    If $G$ is an ordinary group, viewed as a partial group via the bar construction, then by \cite[Lemma 2.9]{BrotoGonzalez2021} we have $$Z(BG) = Z(G), \quad N(BG) = G,$$ where $Z(-)$ and $N(-)$ denote the center and normalizer of a partial group. Likewise, $\Aut(BG) \cong \Aut(G)$ and $\Out(BG) \cong \Out(G)$. Hence the partial group constructions recover the classical ones in this case.
\end{remark}

	\section{Cohomology Theories}
	\label{sectionCohomology}

In this section, we present several approaches to cohomology for partial groups. First, we develop a theory inspired by classical group cohomology. Subsequently, we explore an alternative approach based on the underlying simplicial set structure of partial groups.

\subsection{Generalization of Group Cohomology}\label{secGGroupCohomology}
We first describe a cohomology theory analogous to group cohomology, see \cite[Section 2]{EilenbergMacLane1947}. We conjecture that this cohomology theory coincides with the framework used by Broto and Gonzalez in their preprint \cite{BrotoGonzalez2021}, though no explicit definition appears there. We are led to this conjecture by the fact that a direct proof of the classification of extensions can be obtained using this definition, as was carried out in Garry Kaltenrieder's master's thesis.

To develop this theory, we begin by explaining how a partial group can act on an ordinary group. Let $\mm$ be a partial group and $G$ an abelian group. Since the automorphism group $\operatorname{Aut}(G)$ forms a partial group in a natural way, an action of $\mm$ on $G$ is defined to be a partial group homomorphism $\phi \colon \mm \to \operatorname{Aut}(G)$.

We now define the cohomology with respect to the action $\phi$. Consider the cochain groups
$$C^n_\phi(\mm,G)=\hom_{\Set}(\mm_n,G).$$
The coboundary map $\delta^{n-1}_\phi\colon C^{n-1}_\phi(\mm,G)\to C^{n}_\phi(\mm,G)$ is given by
$$\delta^{n-1}_\phi(\psi)\big([x_1|\ldots|x_n]\big)=\phi(x_1)\Big(\psi\big([x_2|\ldots|x_n]\big)\Big)+\sum_{i=1}^n(-1)^i\psi\big(d_i[x_1|\ldots|x_n]\big)$$
for $\psi\in C^{n-1}_\phi(\mm,G)$ and $[x_1|\ldots|x_n]\in \mm_n$. We will next show that this construction yields a cochain complex.

\begin{lemma}\label{lemmaCohomologyChainComplexGrouplike}
Let $\mm$ be a partial group and $G$ an abelian group equipped with an action $\phi\colon\mm\to\operatorname{Aut}(G)$. Then the coboundary operators $\delta^n_\phi$ satisfy $\delta^{n}_\phi\circ\delta^{n-1}_\phi = 0$ for all $n \geq 1$, making $(C^\bullet_\phi(\mm,G),\delta_\phi)$ a cochain complex.
\end{lemma}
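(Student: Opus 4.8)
The plan is to verify $\delta^n_\phi\circ\delta^{n-1}_\phi=0$ by the standard bookkeeping argument for a twisted bar-type cochain complex, organized around the simplicial identities of $\mm$. First I would package the coboundary uniformly: for a cochain $c\in C^m_\phi(\mm,G)$ define operators $\partial_i$, $0\le i\le m+1$, by $(\partial_i c)(\sigma)=c(d_i\sigma)$ for $i\ge 1$ and $(\partial_0 c)(\sigma)=\phi(\sigma_{(1)})\big(c(d_0\sigma)\big)$, where $\sigma_{(1)}\in\mm_1$ denotes the first spine edge of $\sigma$. Then $\delta^m_\phi=\sum_{i=0}^{m+1}(-1)^i\partial_i$, so that $\delta^n_\phi\delta^{n-1}_\phi$, evaluated on an arbitrary $(n+1)$-simplex $\tau=[x_1|\ldots|x_{n+1}]$, becomes the double sum $\sum_{i=0}^{n+1}\sum_{j=0}^{n}(-1)^{i+j}(\partial_i\partial_j\psi)(\tau)$ with $(\partial_i\partial_j\psi)(\tau)=\psi(d_jd_i\tau)$ up to the twists. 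Note that every iterated face $d_jd_i\tau$ is automatically defined, since $\tau\in\mm_{n+1}$ and the $d_i$ are the everywhere-defined face maps of the simplicial set; thus no partiality issues arise in writing down the terms.

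Next I would split the double sum into four families according to whether each index is $0$ or positive. The purely untwisted family, $i\ge 1$ and $j\ge 1$, contributes $(-1)^{i+j}\psi(d_jd_i\tau)$; here the simplicial identity $d_ad_b=d_{b-1}d_a$ for $a<b$ yields the involution pairing $(i,j)\mapsto(j,i-1)$ when $j<i$ and $(i,j)\mapsto(i+1,j)$ when $j\ge i$, which cancels these terms in pairs exactly as in the proof that the ordinary simplicial coboundary squares to zero. For the mixed terms, $\partial_0\partial_j$ with $1\le j\le n$ contributes $(-1)^j\phi(x_1)\big(\psi(d_jd_0\tau)\big)$, while $\partial_{j'}\partial_0$ with $2\le j'\le n+1$ contributes $(-1)^{j'}\phi(x_1)\big(\psi(d_0d_{j'}\tau)\big)$, the point being that the first spine edge of $d_{j'}\tau$ is still $x_1$ once $j'\ge 2$. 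Applying the case $a=0,\ b=j+1$ of the simplicial identity, namely $d_0d_{j+1}=d_jd_0$, these match up with opposite signs and cancel.

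The only remaining, and genuinely partial-group-specific, cancellation is between the doubly twisted term $\partial_0\partial_0$ and the term $\partial_1\partial_0$. Here $(\partial_0\partial_0\psi)(\tau)=\phi(x_1)\phi(x_2)\big(\psi(d_0d_0\tau)\big)$, since the first spine edge of $d_0\tau=[x_2|\ldots|x_{n+1}]$ is $x_2$, whereas $\partial_1\partial_0$ contributes $-\,\phi\big(\Pi[x_1|x_2]\big)\big(\psi(d_0d_1\tau)\big)$, the first spine edge of $d_1\tau$ being the product $\Pi[x_1|x_2]$. Because $[x_1|x_2]\in\mm_2$ is a face of $\tau$ and $\phi$ is a morphism of partial groups, we have $\phi\big(\Pi[x_1|x_2]\big)=\phi(x_1)\circ\phi(x_2)$; combined with $d_0d_1=d_0d_0$ this makes the two terms equal up to sign, so they cancel, and all four families vanish. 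I expect this last step to be the main point of the argument: it is precisely where the hypothesis that $\phi$ is a partial group homomorphism (and not merely a set map $\mm_1\to\operatorname{Aut}(G)$) is used, and it is the exact analogue of the module-action identity in ordinary group cohomology. With all terms cancelled we conclude $\delta^n_\phi\delta^{n-1}_\phi=0$, so $(C^\bullet_\phi(\mm,G),\delta_\phi)$ is a cochain complex.
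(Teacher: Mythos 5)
Your proposal is correct and follows essentially the same route as the paper's proof: expand the composite into the untwisted double sum (cancelled by the simplicial identities), the mixed $\partial_0\partial_j$ versus $\partial_{j+1}\partial_0$ terms (cancelled via $d_0d_{j+1}=d_jd_0$ and the fact that $d_{j'}$ preserves the first spine edge for $j'\ge 2$), and the corner terms $\partial_0\partial_0$ versus $\partial_1\partial_0$ (cancelled using $\phi(x_1\cdot x_2)=\phi(x_1)\circ\phi(x_2)$), which is exactly the paper's three-way decomposition. The uniform $\partial_i$ packaging is a cosmetic improvement, and your identification of the homomorphism property of $\phi$ as the one genuinely partial-group-specific step matches the paper's argument.
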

\begin{proof}
Let $\psi \colon \mm_{n-1} \to G$ be an $n$-cochain and $x = [x_1|\ldots|x_{n+1}] \in \mm_{n+1}$. Expanding the composition of coboundary operators yields
\begin{align*}
(\delta^n_\phi \circ \delta^{n-1}_\phi)(\psi)(x) &= \phi(x_1)\big(\delta^{n-1}_\phi(\psi)(d_0x)\big) + \sum_{j=1}^{n+1}(-1)^j \delta^{n-1}_\phi(\psi)(d_j x) \\
&= \phi(x_1)\left(\phi(x_2)\big(\psi(d_0d_0x)\big) + \sum_{i=1}^n (-1)^i \psi(d_i d_0 x)\right) \\
&\quad + \sum_{j=1}^{n+1} (-1)^j \left(\phi(e_1^n d_j x)\big(\psi(d_0 d_j x)\big) + \sum_{i=1}^n (-1)^i \psi(d_i d_j x)\right).
\end{align*}

We begin by analyzing the double sum. Applying the simplicial identities $d_i d_j = d_{j-1} d_i$ for $i < j$, we obtain
\begin{align*}
\sum_{j=1}^{n+1} \sum_{i=1}^n (-1)^{i+j} \psi(d_i d_j x) 
&= \sum_{i<j} (-1)^{i+j} \psi(d_{j-1} d_i x) + \sum_{i \geq j} (-1)^{i+j} \psi(d_i d_j x) \\
&= \sum_{i \leq j'} (-1)^{i+j'+1} \psi(d_{j'} d_i x) + \sum_{i \geq j} (-1)^{i+j} \psi(d_i d_j x)\\
&= 0.
\end{align*}

For the remaining terms, the properties of the partial group homomorphism give
\begin{align*}
    &\phi(x_1) \circ \phi(x_2)\big(\psi(d_0 d_0 x)\big) - \phi(e_1^n d_1 x)\big(\psi(d_0 d_1 x)\big)\\ 
&= \phi(x_1 \cdot x_2)\big(\psi(d_0 d_0 x)\big) - \phi(x_1 \cdot x_2)\big(\psi(d_0 d_0 x)\big) \\
&= 0.
\end{align*}

Finally, the last terms cancel via
\begin{align*}
&\sum_{i=1}^n (-1)^i \phi(x_1)\big(\psi(d_i d_0 x)\big) + \sum_{j=2}^{n+1} (-1)^j \phi(e_1^n d_j x)\big(\psi(d_0 d_j x)\big) \\
&= \sum_{i=1}^n (-1)^i \phi(x_1)\big(\psi(d_i d_0 x)\big) + \sum_{j=2}^{n+1} (-1)^j \phi(x_1)\big(\psi(d_{j-1} d_0 x)\big)\\
&= 0.
\end{align*}
Thus, $(\delta^n_\phi \circ \delta^{n-1}_\phi)(\psi) = 0$ as required.
\end{proof}

\begin{definition}
Let $\mm$ be a partial group with an action $\phi\colon\mm\to\operatorname{Aut}(G)$ on an abelian group $G$. The \emph{$n$-th cohomology group} $H^n_\phi(\mm,G)$ is the $n$-th cohomology group of the cochain complex $(C^\bullet_\phi(\mm,G),\delta_\phi)$.
\end{definition}
In the following remark, we clarify the relationship with classical group cohomology.

\begin{remark}
Let $H$ be a group acting on an abelian group $G$ via $\phi \colon H \to \operatorname{Aut}(G)$. When viewing $H$ as a partial group $BH$ through its simplicial set structure of its bar construction, we observe that $BH_n = H^n$. Consequently, our definitions recover precisely the classical group cohomology when defined via the bar resolution, see \cite[Chapter III.1, Example~3]{Brown1994}.
\end{remark}

\subsection{Cohomology with Local Coefficients}
We now present a second approach using cohomology with local coefficients for simplicial sets, following Bullejos, Faro, and García-Muñoz's work, \cite{Bullejos2003}. First, we review some basic concepts.

Let $X$ be a simplicial set. For an arbitrary simplicial set, there exist three equivalent definitions of the fundamental groupoid $\Pi_1X$, whose categorical equivalence is established in \cite[Section 3.1]{GoerssJardine}. We adopt the construction $GP_*X$ introduced by Gabriel and Zisman in \cite[Section 2.7.1]{GabrielZisman}, defined as the free groupoid associated to the path category $P_*X$ of~$X$. 

The path category has as objects all vertices of $X$ (i.e., elements of $X_0$) and the morphisms are freely generated by the $1$-simplices of $X$, subject to the relation that for each $2$-simplex $\sigma$ of $X$, the diagram
$$\begin{tikzcd}
v_0 \arrow[rr, "d_2\sigma"] \arrow[rd, "d_1\sigma"'] &     & v_1 \arrow[ld, "d_0\sigma"] \\ &v_2& 
\end{tikzcd}$$
commutes. The fundamental groupoid is then obtained by freely adjoining inverse morphisms to all existing non-invertible morphisms in the path category.

Furthermore, the fundamental groupoid functor $\Pi_1\colon\catname{sSet}\to \catname{Grpd}$ is left adjoint to the nerve functor $\operatorname{Ner}\colon\catname{Grpd}\to \catname{sSet}$.

A \emph{system of local coefficients} for $X$ consists of a covariant functor $$A \colon \Pi_1 X \to \catname{Ab}.$$ For the simplicial set associated to a partial group $\mm$, the local coefficient system reduces to specifying a single abelian group $G$ (since the simplicial set is reduced) together with automorphisms $A(x) \colon G \to G$ for each $x \in \mm_1$. These automorphisms must satisfy the compatibility condition $$A(x_1 \cdot x_2) = A(x_2) \circ A(x_1)$$ for all $[x_1|x_2] \in \mm_2$.

We now define cohomology with local coefficients for a simplicial set $X$. 
Let $\eta_X$ be the unit of the adjunction $\Pi_1 \dashv \operatorname{Ner}$, i.e., $$\eta_X \colon X \longrightarrow \operatorname{Ner}(\Pi_1X).$$
For each $n$-cell $x\in X_n$, if $\eta_X(x)=[x_0\xrightarrow{u_1}x_1\to\cdots\to x_n]\in \operatorname{Ner}(\Pi_1X)_n$, then $$ x_0 = d_1\cdots d_n \eta_X(x) \quad \text{and} \quad u_1 = d_2\cdots d_n \eta_X(x).$$

An \emph{$n$-singular cochain with coefficients in $A$} is a function $\psi$ assigning to each $x\in X_n$ an element $\psi(x)\in A\big(d_1\cdots d_n \eta_X(x)\big)$. We call $\psi$ \emph{normalized} if $\psi(x)=0$ for all degenerate $x\in X_n$.

The collection $C^n_{\eta_X}(X,A)$ of normalized $n$-singular cochains forms an abelian group under pointwise addition. Using the existence of inverses in groupoids, we define the coboundary operator$$\delta^{n-1}\colon C^{n-1}_{\eta_X}(X,A)\to C^{n}_{\eta_X}(X,A)$$ via the alternating sum $$(\delta^{n-1}\psi)(x) = A(u_1)^{-1}\big(\psi(d_0x)\big) + \sum_{i=1}^{n}(-1)^i\psi(d_ix),$$ where $u_1 = d_2\cdots d_{n}\eta_X(x)$. A straightforward verification using the simplical identities shows $\delta^{n+1}\circ\delta^n=0$ for all $n\geq0$, making $\big(C^\bullet_{\eta_X}(X,A),\delta\big)$ a cochain complex.

\begin{definition}
    Let $X$ be a simplicial set equipped with a system of local coefficients $A\colon \Pi_1X \to \catname{Ab}$. The \emph{$n$-th cohomology group with local coefficients in $A$}, written $H^n(X;A)$, is the $n$-th cohomology group of the cochain complex $\big(C^\bullet_{\eta_X}(X,A),\delta\big)$.
\end{definition}

\subsection{Comparison of Cohomology Theories for Partial Groups}
We now compare the two cohomology theories introduced above.

For a partial group $\mm$, the unit map of the adjunction $\Pi_1\dashv \operatorname{Ner}$ satisfies $$\eta_\mm\big([x_1|\ldots|x_n]\big) = \big(*\xrightarrow{x_1}*\xrightarrow{x_2}\cdots\xrightarrow{x_n}*\big),$$ where $*$ denotes the unique element in $\mm_0$. 

When considering the generalized group cohomology from Section~\ref{secGGroupCohomology} for an abelian group $G$ with partial group homomorphism $\phi\colon\mm\to G$, the assignment $$A(*) = G \quad \text{and} \quad A(x) = \phi(x)^{-1} \quad \text{for all } x\in \mm_1$$ yields a covariant functor $A\colon \Pi_1\mm \to \catname{Ab}$, i.e., a system of local coefficients. The functoriality follows from the identity $$A(x_1\cdot x_2) = \phi(x_1\cdot x_2)^{-1} = \big(\phi(x_1)\circ\phi(x_2)\big)^{-1} = \phi(x_2)^{-1}\circ\phi(x_1)^{-1} = A(x_2)\circ A(x_1).$$

We further observe that the cochains and coboundary operations coincide with our previous definition, modulo the normalization condition in local coefficient cohomology. By generalizing the argument of Eilenberg and MacLane \cite{EilenbergMacLane1947} to this framework, the following lemma, which is the main technical ingredient to extend the classical identification to partial groups, demonstrates that these theories produce isomorphic cohomology groups.

\begin{lemma}
Let $\mm$ be a partial group acting on an abelian group $G$ via $\phi\colon\mm\to\operatorname{Aut}(G)$, and consider the cochain complex $\big(C^\bullet_\phi(\mm,G),\delta_\phi\big)$ introduced in Lemma~\ref{lemmaCohomologyChainComplexGrouplike}. Then:
\begin{enumerate}[label=(\roman*)]
    \item Every cocycle is cohomologous to a normalized cocycle.
    \item Every normalized coboundary is the coboundary of a normalized cochain.
\end{enumerate}
\end{lemma}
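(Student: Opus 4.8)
The plan is to adapt the classical normalization argument of Eilenberg--MacLane to the partial-group setting, working with the degeneracy maps $s_i$ directly. Recall that a cochain $\psi$ is \emph{normalized} when it vanishes on all degenerate simplices, i.e. on every simplex of the form $s_i(y)$. The key observation is that in a reduced simplicial set the degenerate simplices in $\mm_n$ are exactly those $[x_1|\ldots|x_n]$ in which some $x_j = 1$, so normalization means $\psi$ vanishes whenever an entry is the unit. To prove (i), I would construct, for each cocycle $\psi$, a sequence of cochains that are successively normalized in more and more coordinates. Concretely, for $0 \le k \le n-1$ I would define operators that modify $\psi$ by a coboundary so as to kill its values on simplices degenerate in the $k$-th slot, and show that applying these in turn produces a normalized cocycle cohomologous to $\psi$. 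The standard trick is to introduce the cochain $\sigma_k\psi \in C^{n-1}_\phi(\mm,G)$ defined by $(\sigma_k\psi)(y) = \psi(s_k y)$ and to compute $\delta_\phi \sigma_k\psi$ using the simplicial identities relating $d_i s_k$; replacing $\psi$ by $\psi \pm \delta_\phi\sigma_k\psi$ then improves the normalization by one coordinate.

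The central computation is to expand $\delta^{n-1}_\phi(\sigma_k\psi)$ on a simplex $x = [x_1|\ldots|x_n]$ and separate out the term $\psi(x)$ itself, using the identities $d_i s_k = s_{k-1}d_i$ for $i<k$, $d_k s_k = d_{k+1}s_k = \mathrm{Id}$, and $d_i s_k = s_k d_{i-1}$ for $i>k+1$. The three cases $i<k$, $i \in \{k,k+1\}$, and $i>k+1$ must be handled so that the middle case contributes $\pm\psi(x)$ while the outer cases reorganize into values of $\psi$ on lower-dimensional degenerate simplices plus, crucially, the coboundary term. One must also track the $\phi$-twist: the leading term $\phi(x_1)(\cdots)$ interacts with the $i=0$ face, so care is needed when $k=0$ versus $k\ge 1$, since only the $k\ge 1$ cases leave the $\phi(x_1)$-prefix untouched. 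I expect the bookkeeping of signs and of the twisting action in the boundary slots to be the main obstacle; this is exactly where the partial-group structure could in principle break the classical argument, but because every face $d_i$ of a simplex lies in $\mm_{n-1}$ by the well-definedness already established, the twist $\phi(e_1^n d_j x)=\phi(x_1)$ simplifications from Lemma~\ref{lemmaCohomologyChainComplexGrouplike} carry over verbatim.

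For part (ii), the strategy is essentially the same machinery run in reverse. Suppose $\xi = \delta_\phi\psi$ is a normalized coboundary, with $\psi$ an arbitrary $(n-1)$-cochain; I would replace $\psi$ by its normalization $\psi'$ obtained by subtracting off its values on degenerate simplices via the same $\sigma_k$-type correction, and verify that $\delta_\phi\psi' = \delta_\phi\psi = \xi$ because the correction terms are themselves degenerate-supported and hence coboundaries of cochains that $\delta_\phi$ annihilates modulo $\xi$'s normalization. The cleanest route is to show directly that if $\delta_\phi\psi$ is normalized, then the normalized part $\psi'$ of $\psi$ satisfies $\delta_\phi(\psi - \psi') = 0$ on all of $\mm_n$, so that $\xi = \delta_\phi\psi'$ with $\psi'$ normalized. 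This reduces (ii) to the statement that the normalization operator commutes with $\delta_\phi$ up to the already-established facts from (i).

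Throughout, the guiding principle is that the reduced and injective-spine hypotheses (PM1)--(PM2) guarantee that degeneracy behaves exactly as in the group case, so the only genuinely new point to check is that the twisting homomorphism $\phi$ does not obstruct the telescoping of the coboundary terms. I would therefore structure the proof so that the $\phi$-equivariance is isolated into a single lemma-internal identity and the remaining manipulations are purely simplicial, mirroring \cite{EilenbergMacLane1947} step for step.
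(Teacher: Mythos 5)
Your proposal is correct and follows essentially the same route as the paper: both run the Eilenberg--MacLane successive-normalization argument, correcting one coordinate at a time by the coboundary of $\pm\,\psi\circ s_k$ (your $\sigma_k\psi$ is the paper's $\chi_{i+1}$ up to sign), and both hinge on the observation that the residual term after each correction is $\pm\,\delta_\phi\psi$ evaluated on a doubly degenerate simplex, which vanishes exactly under the hypothesis that $\delta_\phi\psi$ is normalized --- covering (i) and (ii) simultaneously. The paper packages this as an induction on ``$i$-normalized'' cochains, but the content is identical to your coordinate-by-coordinate scheme.
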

\begin{proof}
    Let $\psi\colon \mm_n\to G$ be an $n$-cochain. We say $\psi$ is \emph{$i$-normalized} for $0\leq i\leq n$ if $\psi\big([x_1|\ldots|x_n]\big)=0$ whenever any of the first $i$ components equals $1$. Note that:
\begin{itemize}
    \item Every cochain is $0$-normalized.
    \item A cochain is normalized precisely when it is $n$-normalized.
\end{itemize}

From the coboundary operator's definition, if $\psi$ is $i$-normalized, then $\delta^n_\phi(\psi)$ remains $i$-normalized. This holds because all but two adjacent terms vanish immediately by the $i$-normalization condition, and these remaining terms cancel each other.

We proceed by induction to show that any $n$-cochain $\psi$ with $\delta^n_\phi(\psi)$ normalized is cohomologous to some normalized $n$-cochain $\psi_n$. Construct inductively cochains $$\psi_0, \ldots, \psi_n \in C^n_\phi(\MM,G) \quad \text{and} \quad \chi_1, \ldots, \chi_n \in C^{n-1}_\phi(\MM,G),$$
with $\psi_0 = \psi$ and, for $1 \le i \le n$, $$\psi_i = \psi_{i-1} - \delta^{n-1}_\phi(\chi_i), \qquad 
\chi_i = (-1)^{i-1} \, \psi_{i-1} \circ s_{i-1}.$$
Observe that $\psi$ and $\psi_i$ are cohomologous for all $i$. 

We show by induction, that $\psi_i$ is $i$-normalized for all $i$. For the base case $i=0$, the claim is trivial. For $i=1$, we must verify $\psi_1\big([1|x_2|\ldots|x_n]\big)=0$ for all $[x_2|\ldots|x_n]\in\mm_{n-1}$. The computation shows
    \begin{align*}
        \psi_1\big([1|x_2|\ldots|x_n]\big) =& \psi\big([1|x_2|\ldots|x_n]\big)-\phi(1)\big(\chi_1\big([x_2|\ldots|x_n]\big)\\
        &+ \chi_1\big([1\cdot x_2|\ldots|x_n]\big) + \sum_{i=2}^{n-1}(-1)^{i+1}\chi_1\big([1|x_2|\ldots|x_i\cdot x_{i+1}|\ldots|x_n]\big)\\
        &+ (-1)^{n+1}\chi_1\big([1|x_2|\ldots|x_{n-1}]\big)\\
        =& \psi\big([1|x_2|\ldots|x_n]\big) -\psi\big([1|x_2|\ldots|x_n]\big)\\
        &+ \psi\big([1|x_2|\ldots|x_n]\big) + \sum_{i=2}^{n-1}(-1)^{i+1}\psi\big([1|1|x_2|\ldots|x_i\cdot x_{i+1}|\ldots|x_n]\big)\\
        &+ (-1)^{n+1}\psi\big([1|1|x_2|\ldots|x_{n-1}]\big)\\
        =&\delta^n_\phi(\psi)\big([1|1|x_2|\ldots|x_n]\big)\\
        =& 0.
    \end{align*}
    The vanishing follows because $\delta^n_\phi(\psi)$ is normalized by hypothesis.
    Suppose the statement holds for some $i \geq 1$. To prove it for $i+1$, we first observe that since $\psi_i$ is $i$-normalized, the construction $\chi_{i+1} = (-1)^{i-1}\psi_i \circ s_{i-1}$ makes $\chi_{i+1}$ $i$-normalized as well. As established earlier, this implies $\delta^{n-1}_\phi(\chi_{i+1})$ remains $i$-normalized, and consequently $\psi_{i+1} = \psi_i - \delta^{n-1}_\phi(\chi_{i+1})$ is also $i$-normalized.

    The crucial step is verifying $\psi_{i+1}(s_i x) = 0$ for all $x \in \mm_{n-1}$. Computing explicitly
    \allowdisplaybreaks[0]
    \begin{align*}
        \psi_{i+1}(s_i x) =& \psi_i\big([x_1|\ldots|x_i|1|x_{i+1}|\ldots|x_{n-1}]\big)-\delta^{n-1}_\phi(\chi_{i+1})\big([x_1|\ldots|x_i|1|x_{i+1}|\ldots|x_{n-1}]\big) \displaybreak[3] \\
        =& \psi_i\big([x_1|\ldots|x_i|1|x_{i+1}|\ldots|x_{n-1}]\big) -\phi(x_1)\big(\chi_{i+1}\big([x_2|\ldots|x_i|1|x_{i+1}|\ldots|x_{n-1}]\big)\big)\\
        &+\sum_{j=1}^{i-1} (-1)^{j-1}\chi_{i+1}\big([x_1|\ldots|x_j\cdot x_{j+1}|\ldots|x_i|1|x_{i+1}|\ldots|x_{n-1}]\big)\\
        &+(-1)^{i-1}\chi_{i+1}\big([x_1|\ldots|x_{i-1}|x_i\cdot1|x_{i+1}|\ldots|x_{n-1}]\big)\\
        &+(-1)^i\chi_{i+1}\big([x_1|\ldots|x_i|1\cdot x_{i+1}|x_{i+2}|\ldots|x_{n-1}]\big)\\
        &+ \sum_{j=i+1}^{n-2} (-1)^j \chi_{i+1}\big([x_1|\ldots|x_i|1|x_{i+1}|\ldots|x_j\cdot x_{j+1}|\ldots|x_{n-1}]\big)\\
        &+ (-1)^{n-1} \chi_{i+1}\big([x_1|\ldots|x_i|1|x_{i+1}|\ldots|x_{n-2}]\big) \displaybreak[3]\\
        \overset{(1)}{=}& \psi_i\big([x_1|\ldots|x_i|1|x_{i+1}|\ldots|x_{n-1}]\big)\\
        &+ \sum_{j=i+1}^{n-2} (-1)^{j+i} \psi_i\big([x_1|\ldots|x_i|1|1|x_{i+1}|\ldots|x_j\cdot x_{j+1}|\ldots|x_{n-1}]\big)\\
        &+ (-1)^{i+n-1} \psi_i\big([x_1|\ldots|x_i|1|1|x_{i+1}|\ldots|x_{n-2}]\big) \displaybreak[3]\\
        \overset{(2)}{=}& (-1)^i\delta^n_\phi(\psi_i)\big([x_1|\ldots|x_i|1|1|x_{i+1}|\ldots|x_{n-1}]\big)\\
        \overset{(3)}{=}& 0.
    \end{align*}
    \allowdisplaybreaks[4]
    In equality (1) we use the $i$-normalization of $\chi_{i+1}$ which causes most terms to vanish. For (2), the $i$-normalization of $\psi_i$ makes the first $i-1$ terms in the coboundary expansion zero. Finally, (3) follows because $\psi_i$ is cohomologous to $\psi$ and $\delta^n_\phi\psi$ is normalized by assumption.

    We now complete the proof. First consider the case where $\psi$ is a cocycle. Since $\delta^n_\phi(\psi) = 0$ is trivially normalized, our inductive construction yields a normalized cocycle $\psi_n$ that is cohomologous to $\psi$.
    For the second claim, suppose $\psi$ is a normalized coboundary, meaning $\psi = \delta^{n-1}_\phi(\xi)$ for some $\xi \in C^{n-1}_\phi(\mm,G)$. Applying the same inductive procedure to $\xi$ produces a normalized cochain $\xi_{n-1}$ satisfying $\delta^{n-1}_\phi(\xi_{n-1}) = \psi$.
\end{proof}
\begin{theorem}\label{thmCohomology}
    Let $\mm$ be a partial group acting on an abelian group $G$ via a homomorphism $\phi\colon \mm \to \operatorname{Aut}(G)$. Let $A\colon \Pi_1 \mm \to \catname{Ab}$ be the local coefficient system induced by $\phi$ and $G$. 
    Then the generalized group cohomology and the cohomology with local coefficients coincide; that is, $$H^n_\phi(\mm, G) \cong H^n(\mm; A).$$
\end{theorem}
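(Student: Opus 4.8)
The plan is to recognize $H^n(\mm; A)$ as the cohomology of the normalized subcomplex sitting inside the full complex that computes $H^n_\phi(\mm, G)$, and then to invoke the preceding lemma to show that the inclusion of these complexes is a quasi-isomorphism.

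First I would pin down the identification of the two cochain complexes away from the normalization condition, making precise the remark stated just before the lemma. Because $\mm$ is reduced, for any $x = [x_1|\ldots|x_n] \in \mm_n$ the target group $A(d_1\cdots d_n\,\eta_\mm(x))$ is $A(*) = G$, so a local-coefficient $n$-cochain is simply a function $\mm_n \to G$ vanishing on degenerate simplices. I would then verify that the two coboundary formulas agree termwise: tracing the face maps in $\operatorname{Ner}(\Pi_1\mm)$ shows $u_1 = d_2\cdots d_n\,\eta_\mm(x) = x_1$ and $d_0x = [x_2|\ldots|x_n]$, whence $A(u_1)^{-1} = \phi(x_1)$, so the leading term $A(u_1)^{-1}(\psi(d_0x))$ of the local-coefficient coboundary becomes exactly $\phi(x_1)(\psi([x_2|\ldots|x_n]))$, the leading term of $\delta^{n-1}_\phi$, while the alternating face-map sums are visibly identical. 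Hence $C^\bullet_{\eta_\mm}(\mm, A)$ is precisely the subcomplex of $(C^\bullet_\phi(\mm, G),\delta_\phi)$ consisting of normalized cochains, and the inclusion $\iota\colon C^\bullet_{\eta_\mm}(\mm, A)\hookrightarrow C^\bullet_\phi(\mm, G)$ is a map of cochain complexes.

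It then remains to show $\iota$ induces an isomorphism in every degree, and the two parts of the preceding lemma supply exactly this. For surjectivity of $\iota_*$ I would take a class in $H^n_\phi(\mm, G)$ with cocycle representative $\psi$; part (i) replaces $\psi$ by a cohomologous normalized cocycle, whose class in $H^n(\mm; A)$ maps to $[\psi]$. For injectivity I would take a normalized cocycle $\psi$ whose image is a coboundary in the full complex, say $\psi = \delta^{n-1}_\phi(\xi)$; since $\psi$ is normalized it is a normalized coboundary, so part (ii) furnishes a normalized $\xi_{n-1}$ with $\delta^{n-1}_\phi(\xi_{n-1}) = \psi$, proving $[\psi] = 0$ already in $H^n(\mm; A)$. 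Combining these yields the claimed isomorphism degreewise.

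I do not anticipate a serious obstacle, since the whole analytic weight has been absorbed into the lemma, whose inductive straightening of a cochain to a normalized representative is the genuine mechanism. The only delicate point is the bookkeeping in the first step: I must confirm that the edge morphism $u_1$ collapses to $x_1$ and that the inverse $A(x_1)^{-1}$ cancels against the inversion defining $A$ to reproduce $\phi(x_1)$, so that the two coboundaries coincide on the nose rather than up to a stray sign or inverse. Once that identification is secured, the theorem follows formally from the standard comparison between a cochain complex and its normalized subcomplex.
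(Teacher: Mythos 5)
Your proposal is correct and follows exactly the route the paper intends: the discussion preceding the lemma identifies the local-coefficient complex with the normalized subcomplex of $(C^\bullet_\phi(\mm,G),\delta_\phi)$ (including the cancellation $A(u_1)^{-1}=\phi(x_1)$ you flag), and the two parts of the lemma are then applied precisely as you do to get surjectivity and injectivity of the map induced by the inclusion.
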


This equivalence shows that both constructions capture the same underlying cohomological information for partial groups, despite arising from seemingly different perspectives. In practice, the generalized group cohomology description may be more convenient for explicit calculations, whereas the local coefficient approach offers a broader categorical framework that readily extends to other simplicial objects.

	\section{Extensions of Partial Groups}
	\label{sectionExtensions}

In this section, we develop the theory of extensions of partial groups and present several illustrative examples. Our general framework follows the approach established in \cite{BrotoGonzalez2021}. 

\subsection{Extension Theory}

We begin by recalling the notion of fiber bundles for simplicial sets, with particular emphasis on their specialized properties in the context of partial groups.

\begin{definition}
    A \emph{fibre bundle} of simplicial sets is a simplicial map $f\colon E\to B$ satisfying the following conditions:
    \begin{enumerate}[label=(\roman*)]
        \item $f$ is surjective; and
        \item there is a simplicial set $F$ such that, for any simplex $b\colon \Delta[n]\to B$, there is a isomorphism $\phi_b\colon F\times \Delta[n]\to E_b$ making the following diagram commute
        $$\begin{tikzcd}
        {F\times \Delta[n]} \arrow[rd] \arrow[r, "\phi_b","\cong"'] & E_b \arrow[d] \arrow[r]    & E \arrow[d, "f"] \\
        & {\Delta[n]} \arrow[r, "b"] & B               
        \end{tikzcd}$$
        where $E_b\to \Delta[n]$ is the pullback of $E\to B$ along $b$.
    \end{enumerate}
    The simplicial set $F$ is called the \emph{fibre} of the bundle, $B$ is the \emph{base space} and $E$ the \emph{total space}.
\end{definition}
\begin{definition}
    Let $\mm$ and $\hh$ be partial groups. An \emph{extension} of $\hh$ by $\mm$ is a fibre bundle $\tau\colon \mathbb{E}\to \hh$ with fibre $\mm$.
\end{definition}
\begin{definition}
    Two extensions $\mm\to\mathbb{E}_i\to \hh$, $i=1,2$, are \emph{equivalent} if there is an isomorphism $\psi\colon \mathbb{E}_1\to \mathbb{E}_2$ such that the diagram $$\begin{tikzcd}
    \mm \arrow[r] \arrow[d, equal] & \mathbb{E}_1 \arrow[r] \arrow[d, "\psi"] & \hh \arrow[d, equal] \\ \mm \arrow[r] &\mathbb{E}_2 \arrow[r] & \hh \end{tikzcd}$$
    is commutative.
\end{definition}
This definition is consistent with the concept of fibre homotopy equivalence of fibre bundles, since homotopy equivalences of partial groups are isomorphisms, see \cite[Lemma 2.12]{BrotoGonzalez2021}.

Given an extension of partial groups $\mm \to \mathbb{E} \xrightarrow{\tau} \hh$, we first observe that the total space $\mathbb{E}$ forms a partial monoid. The reduction property of $\mathbb{E}$ is immediate.
Now consider $n$-simplices $x,y \in \mathbb{E}$ with identical spines, $\boldsymbol{e}^n(x) = \boldsymbol{e}^n(y)$. Since $\tau$ preserves spines, we have $\tau(x) = \tau(y)$ because $\hh$ is a partial group. Setting $b \coloneqq \tau(x) = \tau(y)$, we see that $x,y \in \mathbb{E}_b$ in the fiber bundle notation. The isomorphism $\mathbb{E}_b \cong \mm \times \Delta[n]$ guarantees injectivity of the spine operator, forcing $x = y$.

The existence of an inversion operation on $\mathbb{E}$ follows directly from the structure theorem for partial group extensions, see Theorem~\ref{thmStructurePartialGroupExtensions}.
Building on the classical combinatorial description of fiber bundles via twisting functions (as appearing already in May's book \cite{May1992}), we adapt this framework, following the approach pioneered by Broto and Gonzalez.
\begin{definition}\label{dfnTwistingPair}
    Let $\mm$ and $\hh$ be partial groups. An \emph{$\hh$-twisting pair for $\mm$} is a pair of functions $(t,\eta)$, $$t\colon\hh_1\to \operatorname{Aut}(\mm)\quad \text{ and }\quad \eta\colon\hh_2\to N(\mm),$$ satisfying the following conditions:
    \begin{enumerate}[label=(\roman*)]
        \item $\eta(g,h)$ determines a homotopy $t(g)\circ t(h)\xleftarrow{\eta(g,h)}t(g\cdot h)$, for each pair $[g|h]\in\hh_2$;
        \item $t(1)=\operatorname{Id}$ and $\eta(g,1)=1=\eta(1,g)$ for all $g\in\hh_1$; and
        \item (cocycle condition) for all $[g|h|k]\in\hh_3$, $$t(g)\big(\eta(h,k)\big)\cdot\eta(g,h\cdot k)=\eta(g,h)\cdot\eta(g\cdot h,k).$$
    \end{enumerate}
\end{definition}
In particular, an $\hh$-twisting pair of $\mm$ determines an outer action of $\hh$ on $\mm$, that is, a homomorphism of partial groups $\hh\to B\operatorname{Out}(\mm)$.

\begin{theorem}\label{thmStructurePartialGroupExtensions}
    Let $\mm$ and $\hh$ be partial groups. Then, the following holds.
    \begin{enumerate}[label=(\roman*)]
        \item An $\hh$-twisting pair $\phi=(t,\eta)$ for $\mm$ defines an extension $\mm\times_\phi\hh$ of $\hh$ by $\mm$, which is a partial group with $n$-simplices $\big[(x_1,g_1)\big|\ldots\big|(x_n,g_n)\big]$ satisfying the conditions
        \begin{enumerate}
            \item $[g_1|\ldots|g_n]\in\hh_n$; and
            \item $\big[x_1\big|t(g_1)(x_2)\big|\big(t(g_1)\circ t(g_2)\big)(x_3)\ldots\big|\big(t(g_1)\circ\ldots\circ t(g_{n-1})\big)(x_n)\big]\in \mm_n$.
        \end{enumerate}
        The product and inverse formulae for $\mm\times_\phi\hh$ are, respectively, $$\Pi\big[(x,g)\big|(z,h)\big]=\big[(x\cdot t(g)(z)\cdot\eta(g,h),g\cdot h\big],$$ $$\big[(x,g)\big]^{-1}=\big[\eta(g^{-1},g)^{-1}\cdot t(g^{-1})(x^{-1}),g^{-1}\big],$$
        in terms of products and inverses in $\mm$ and $\hh$. The projection $\tau\colon \mm\times_\phi \hh\to\hh$ is defined on $n$-simplices as $\tau\big[(x_1,g_1)\big|\ldots\big|(x_n,g_n)\big]=[g_1|\ldots|g_n]$.
        \item Given an extension $\mm\to\mathbb{E}\xrightarrow{\rho}\hh$, there is an $\hh$-twisting pair $\phi=(t,\eta)$ for $\mm$, and an equivalence of extensions
        $$\begin{tikzcd}
        \mm\times_\phi\hh \arrow[d, "\tau"'] \arrow[r, "\cong"] & \mathbb{E} \arrow[d, "\rho"] \\
        \hh \arrow[r, equal] & \hh.
        \end{tikzcd}$$
        In particular, the total space of an extension of partial groups is again a partial group.
    \end{enumerate}
\end{theorem}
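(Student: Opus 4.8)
The plan is to prove the two directions separately, treating (i) as an explicit construction to be verified axiom by axiom and (ii) as a recognition result obtained by choosing local trivializations of the bundle. Both rest on the observation that conditions (a) and (b) make an $n$-simplex of $\mm\times_\phi\hh$ completely determined by its spine, so the spine operator is injective and $\mm\times_\phi\hh$ is reduced essentially for free; the content lies in checking that the proposed face and degeneracy maps land in the set cut out by (a) and (b) and satisfy the simplicial identities.

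For part (i), first I would pin down the simplicial structure compatible with the stated product: the degeneracies insert $(1,1)$, the outer faces $d_0$ and $d_n$ delete the first and last entry (with $d_0$ reindexing the $t$-twists), and the inner faces $d_i$ for $1\le i\le n-1$ merge the $i$-th and $(i+1)$-st entries via the binary product $\Pi[(x,g)|(z,h)]=(x\cdot t(g)(z)\cdot\eta(g,h),\, g\cdot h)$. I would then verify that $d_i$ of a simplex satisfying (a),(b) again satisfies (a),(b): condition (a) reduces to the fact that $\hh$ is a partial group, while condition (b) uses the homotopy relation $t(g)\circ t(h)\xleftarrow{\eta(g,h)}t(g\cdot h)$ from Definition~\ref{dfnTwistingPair}(i) to rewrite the twisted $\mm$-component. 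The simplicial identities then follow, the only substantial one being $d_i d_{i+1}=d_i d_i$ on overlapping merges, which unwinds to precisely the cocycle condition (iii). Reducedness and injectivity of the spine give the partial-monoid axioms, axiom \ref{axiomP2} is the associativity encoded again by (iii), and the inversion axiom \ref{axiomI1} is a direct computation with the stated inverse formula using that $\eta$ takes values in $N(\mm)$ together with property (ii). Finally, for each $b\colon\Delta[n]\to\hh$, the map sending $(u,\sigma)$ to the simplex with $\mm$-component $u$ twisted along $b(\sigma)$ furnishes the required isomorphism $\mm\times\Delta[n]\xrightarrow{\cong}(\mm\times_\phi\hh)_b$, exhibiting $\tau$ as a fibre bundle with fibre $\mm$.

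For part (ii), given $\mm\to\mathbb{E}\xrightarrow{\rho}\hh$, I would extract a twisting pair from a coherent choice of trivializations. Local triviality over each nondegenerate $1$-simplex $g\in\hh_1$ yields an isomorphism $\mm\times\Delta[1]\cong\mathbb{E}_g$; comparing the two ends of this trivialization produces an automorphism, which I would declare to be $t(g)$, normalizing so that $t(1)=\operatorname{Id}$. Over a $2$-simplex $[g|h]$ the three edge trivializations need not agree on the overlap, and the discrepancy between $t(g)\circ t(h)$ and $t(g\cdot h)$ is measured by an element of $\mm_1$ which, by the homotopy lemma, lies in $N(\mm)$; this defines $\eta(g,h)$ and gives axiom (i) by construction. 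Axiom (ii) is the normalization of the chosen trivializations on degeneracies, and the cocycle condition (iii) is the coherence of the four edge trivializations around a $3$-simplex $[g|h|k]$. With the pair $(t,\eta)$ in hand, the trivializations assemble into a simplicial map $\mm\times_\phi\hh\to\mathbb{E}$ over $\hh$ restricting to the identity on the fibre; it is a levelwise bijection, hence an isomorphism, giving the equivalence of extensions and in particular the partial-group structure on $\mathbb{E}$ transported from part (i).

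The main obstacle will be part (ii): making the choices of trivializations coherent enough that the resulting $(t,\eta)$ genuinely satisfies the cocycle condition rather than merely a condition up to further correction. Concretely, the trivialization over a simplex $b\colon\Delta[n]\to\hh$ is only determined up to the fibrewise automorphisms compatible with $b$, and I must use the partial-group (as opposed to merely partial-monoid) hypothesis---via the homotopy lemma characterizing $N(\mm)$ and $Z(\mm)$---to show these ambiguities can be absorbed so that $\eta$ lands in $N(\mm)$ and the $3$-simplex coherence becomes exactly (iii). Once the bookkeeping over the $2$- and $3$-skeleta of $\hh$ is controlled, the remaining verifications are formal consequences of the simplicial identities.
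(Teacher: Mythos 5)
The paper offers no proof of this theorem: it defers entirely to Broto and Gonzalez \cite{BrotoGonzalez2021}, Theorem~4.5. Your outline reconstructs essentially the same argument as that cited proof -- the twisted Cartesian product with faces given by spine-merging for part (i), and extraction of a twisting pair from normalized local trivializations for part (ii) -- and the key identifications are right: conditions (a) and (b) do make simplices determined by their spines, the adjacent-face identity $d_id_{i+1}=d_id_i$ does unwind (after cancelling the common factor $t(g)\big(t(h)(w)\big)$ via the homotopy relation of \cref{dfnTwistingPair}(i)) to exactly the cocycle condition, and the genuine difficulty in part (ii) is indeed the coherence of the chosen trivializations over the $2$- and $3$-skeleta of $\hh$, which you correctly flag. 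Two points are understated rather than wrong: the inversion axiom \ref{axiomI1} must be verified for simplices of every degree, not only for the $1$-simplices where the inverse formula is stated (the higher inverses are forced by the spine, but one must still check that the reversed, inverted, re-twisted spine lies in the domain, which again uses $\eta(g,h)\in N(\mm)$ and the homotopy lemma); and the well-definedness of the inner faces, i.e.\ that the merged tuple still satisfies condition (b), is where the full strength of the homotopy characterization of $N(\mm)$ (all the words $\omega_k$ lying in $\mm$) is actually used.
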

\begin{proof}
    A proof of this result can be found in \cite[Theorem 4.5]{BrotoGonzalez2021}.
\end{proof}

The preceding theorem establishes that an extension of partial groups $\mm \to \mathbb{E} \xrightarrow{\tau} \hh$ is classified up to equivalence by a twisting pair $\phi = (t, \eta)$. As shown in \cite{BrotoGonzalez2021}, this extension induces a canonical outer action $$\alpha \colon \hh \to B\operatorname{Out}(\mm), \quad \alpha(g) = \big[t(g)\big] \in \operatorname{Out}(\mm) \text{ for } g \in \hh_1,$$
which depends only on the equivalence class of the extension. We call $\alpha$ the \emph{outer action induced by the extension} and denote by $\mathcal{E}(\mm, \hh, \alpha)$ the set of equivalence classes of extensions of $\hh$ by $\mm$ with fixed induced outer action $\alpha$.

The classification parallels the classical theory for finite groups. We note that the center $Z(\mm)$ remains invariant under the $\operatorname{Out}(\mm)$-action on $\mm$, yielding a group homomorphism $$\epsilon \colon \operatorname{Out}(\mm) \to \operatorname{Aut}\big(Z(\mm)\big).$$
Through composition with $\alpha$, we obtain an action $\phi_\alpha$ of $\hh$ on $Z(\mm)$. The relevant cohomology groups are then those defined in Section~\ref{sectionCohomology} for this induced action.

\begin{theorem}\label{thmClassificationPartialGroupExtensions}
    Let $\mm$ and $\hh$ be partial groups, and let $\alpha\colon\hh\to B\operatorname{Out}(\mm)$ be an outer action. Then, the following holds.
    \begin{enumerate}[label=(\roman*)]
        \item The set $\mathcal{E}(\mm,\hh,\alpha)$ is nonempty if and only if a certain associated obstruction class $[\kappa]\in H^3_{\phi_\alpha}\big(\hh,Z(\mm)\big)$ vanishes.
        \item If the set $\mathcal{E}(\mm,\hh,\alpha)$ of equivalence classes of extensions of $\hh$ by $\mm$ with outer action $\alpha$ is not empty, then $H^2_{\phi_\alpha}\big(\hh,Z(\mm)\big)$ acts freely and transitively on this set. In particular, by chosing a representative of $\mathcal{E}(\hh,\mm,\alpha)$, we deduce that $$H^2_{\phi_\alpha}\big(\hh,Z(\mm)\big)\cong\mathcal{E}(\hh,\mm,\alpha).$$
    \end{enumerate}
\end{theorem}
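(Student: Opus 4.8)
The plan is to translate the entire classification into the combinatorial language of $\hh$-twisting pairs supplied by Theorem~\ref{thmStructurePartialGroupExtensions}, and then to mirror the Eilenberg--MacLane theory of extensions with nonabelian kernel. By that theorem, equivalence classes of extensions of $\hh$ by $\mm$ correspond to equivalence classes of twisting pairs $\phi = (t,\eta)$, and the induced outer action $\alpha$ is read off from the $\Out(\mm)$-component of $t$. Thus fixing $\alpha$ amounts to prescribing $[t(g)] = \alpha(g)$ for every $g\in\hh_1$, and the problem becomes one of understanding which choices of $\eta$ (and which lifts $t$) assemble into a genuine twisting pair, modulo equivalence.

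For part (i), I would first choose, for each $g\in\hh_1$, an arbitrary lift $t(g)\in\Aut(\mm)$ of $\alpha(g)$ with $t(1)=\Id$, and then, using that $\alpha$ is a homomorphism so that $t(g)\circ t(h)$ and $t(g\cdot h)$ lie in the same outer class, choose for each $[g|h]\in\hh_2$ some $\eta(g,h)\in N(\mm)$ realizing the homotopy $t(g)\circ t(h)\xleftarrow{\eta(g,h)}t(g\cdot h)$, normalized so that $\eta(g,1)=\eta(1,g)=1$. The failure of the cocycle condition (iii) of Definition~\ref{dfnTwistingPair} is then measured by
$$\kappa(g,h,k) = \big(\eta(g,h)\cdot\eta(g\cdot h,k)\big)\cdot\big(t(g)(\eta(h,k))\cdot\eta(g,h\cdot k)\big)^{-1}.$$
Both parenthesized factors realize homotopies between the same pair of homomorphisms, namely from $t(g\cdot h\cdot k)$ to $t(g)\circ t(h)\circ t(k)$, so by the exact sequence of Proposition~\ref{propExactSequenceAutomorphismGroups} (which identifies $Z(\mm)$ with the kernel of $c\colon N(\mm)\to\Aut(\mm)$) their quotient $\kappa(g,h,k)$ lies in $Z(\mm)$. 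A direct computation with the homotopy calculus then shows $\kappa$ is a $3$-cocycle for the action $\phi_\alpha$ (the action of $g$ being the restriction of $t(g)$ to $Z(\mm)$), and that replacing $\eta$ by $\eta\cdot z$ with $z\colon\hh_2\to Z(\mm)$, or changing the lift $t$ by an inner adjustment, alters $\kappa$ only by a coboundary. Hence $[\kappa]\in H^3_{\phi_\alpha}(\hh,Z(\mm))$ is well-defined independently of all choices, and a genuine twisting pair inducing $\alpha$ exists precisely when $\eta$ can be corrected so that $\kappa$ vanishes, i.e.\ when $[\kappa]=0$.

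For part (ii), assuming $\mathcal{E}(\mm,\hh,\alpha)\neq\emptyset$, fix a twisting pair $(t,\eta_0)$. Given a $2$-cocycle $z\in Z^2_{\phi_\alpha}(\hh,Z(\mm))$, I would form $\eta_z=\eta_0\cdot z$; since $\eta_0$ satisfies the cocycle condition and $z$ is central, $(t,\eta_z)$ is again a twisting pair exactly when $\delta^2_{\phi_\alpha}(z)=0$, and this defines an action of $Z^2_{\phi_\alpha}(\hh,Z(\mm))$ on twisting pairs with underlying lift $t$. Passing to equivalence classes of extensions, one checks that cocycles differing by a coboundary yield equivalent extensions, so the action descends to $H^2_{\phi_\alpha}(\hh,Z(\mm))$. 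Transitivity follows because any two twisting pairs inducing $\alpha$ can, after an equivalence bringing their lifts into agreement, be compared directly, their $\eta$-components then differing by a central $2$-cochain which the matching cocycle conditions force to be a cocycle; freeness follows because a cocycle $z$ yielding an extension equivalent to the base must be a coboundary. Choosing the fixed extension as basepoint turns this free and transitive action into the asserted bijection $H^2_{\phi_\alpha}(\hh,Z(\mm))\cong\mathcal{E}(\hh,\mm,\alpha)$.

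The main obstacle I anticipate is twofold. The conceptually delicate point is showing that $\kappa$ takes values in $Z(\mm)$ and that $[\kappa]$ is independent of all choices: this rests on the fact, from Proposition~\ref{propExactSequenceAutomorphismGroups}, that a homotopy between two fixed homomorphisms of partial groups is determined by an element of $N(\mm)$ well-defined up to the center, together with careful use of the relation $\eta\cdot g(x)=f(x)\cdot\eta$ characterizing homotopies. The computationally heavy point is verifying the $3$-cocycle identity $\delta^3_{\phi_\alpha}\kappa=0$ together with the coboundary formulas under change of $\eta$ and of $t$; both are bookkeeping exercises in the partial-group homotopy calculus, substantially simplified by the fact that all elements involved lie in the genuine group $N(\mm)$ (so their products are everywhere defined) and that $t(g)$ restricts to the action $\phi_\alpha(g)$ on the abelian group $Z(\mm)$.
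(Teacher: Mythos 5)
The paper does not prove this theorem itself; it simply cites \cite[Theorem 4.11]{BrotoGonzalez2021}, and your outline reconstructs exactly the strategy of that cited proof: translate extensions into $\hh$-twisting pairs via Theorem~\ref{thmStructurePartialGroupExtensions}, measure the failure of the cocycle condition by a $Z(\mm)$-valued $3$-cochain (central because two realizations of a homotopy between the same pair of automorphisms differ by a central element, via Proposition~\ref{propExactSequenceAutomorphismGroups}), and let $H^2_{\phi_\alpha}(\hh,Z(\mm))$ act by perturbing $\eta$. Your plan is sound and correctly identifies the two delicate points (well-definedness of $[\kappa]$ and the transitivity/freeness bookkeeping), with the key simplification -- that all manipulations take place inside the genuine group $N(\mm)$, so no domain-of-definition issues arise -- explicitly noted.
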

\begin{proof}
    A proof of this result can be found in \cite[Theorem 4.11]{BrotoGonzalez2021}.
\end{proof}

\subsection{Partial Group Extensions of Groups}
We consider the special case where $\mathbb{M} = BK$ and $\mathbb{H} = BH$ are ordinary groups (viewed as partial groups via their bar constructions $BK$ and $BH$). Theorem~\ref{thmStructurePartialGroupExtensions} shows that every extension in this setting can be realized as a twisted Cartesian product $BK \times_{(t,\eta)} BH$. 
Since the domain of definition for the product operation in this case includes all possible elements (by construction), the resulting partial group must be a group, respectively the bar resolution $BG$ of some group $G$. Conversely, an extension $1\to K \to G \to H \to 1$ induces an extension of partial groups $BK\to BG\to BH$.
We aim to demonstrate that the framework developed for partial groups generalizes the known classification theory of group extensions.

First we make the above argument precise.

\begin{prop}\label{propExtensionsGroupsPartial}
    Let $H$ and $K$ be groups. The functor $B\colon\catname{Grp}\to\catname{Part}$ yields a bijective correspondance $$\{\text{extensions of $H$ by $K$}\}/\sim \; \xrightarrow[\cong]{B}\;\{\text{extensions $BK\to \mathbb{E}\to BH$}\}/\sim$$
    where $\sim$ describes the corresponding equivalence of extensions in each category.  
\end{prop}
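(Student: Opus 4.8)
The plan is to establish the bijection by constructing explicit inverse maps and verifying they respect the equivalence relations on both sides. I would set up the argument in three stages: first showing $B$ sends equivalent group extensions to equivalent partial group extensions (so the map is well-defined on equivalence classes), then showing every extension $BK \to \mathbb{E} \to BH$ arises from a group extension (surjectivity), and finally showing that two group extensions producing equivalent partial group extensions must already be equivalent (injectivity).

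First I would make the forward map precise. Given a group extension $1 \to K \to G \to H \to 1$, applying the nerve/bar functor yields a sequence $BK \to BG \to BH$. The key point here is that $BG \to BH$ is genuinely a fibre bundle with fibre $BK$ in the sense of the earlier definition: over each simplex $b\colon \Delta[n] \to BH$, a choice of set-theoretic section of $G \to H$ trivializes the pullback as $BK \times \Delta[n]$, exactly as in the classical simplicial description of principal bundles via twisting functions (May's book, cited in the excerpt). Functoriality of $B$ immediately shows that an equivalence of group extensions, being an isomorphism $G_1 \cong G_2$ commuting with the maps to $K$ and $H$, induces an isomorphism $BG_1 \cong BG_2$ of the same commuting form, hence an equivalence of partial group extensions. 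This handles well-definedness.

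For surjectivity I would invoke the structure theorem (Theorem~\ref{thmStructurePartialGroupExtensions}): any extension $BK \to \mathbb{E} \to BH$ is equivalent to a twisted product $BK \times_\phi BH$ for some $BH$-twisting pair $\phi = (t,\eta)$. The crucial observation, already anticipated in the prose preceding the statement, is that because both $BK$ and $BH$ have \emph{all} words in their domain of definition, the domain-of-definition condition (b) in Theorem~\ref{thmStructurePartialGroupExtensions}(i) is automatically satisfied for every tuple, so $\mathbb{E}$ has $\mathbb{D}(\mathbb{E}) = \mathbb{W}(\mathbb{E}_1)$ and is therefore (by the remark in \cref{sectionPreliminaries}) the bar construction $BG$ of an honest group $G$. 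Reading off the product formula $\Pi[(x,g)|(z,h)] = [(x \cdot t(g)(z) \cdot \eta(g,h), g\cdot h)]$ exhibits $G$ as a classical extension of $H$ by $K$ with twisting data $(t,\eta)$; the twisting pair axioms (i)--(iii) translate precisely into the statement that $t$ lifts an outer action and $\eta$ is a normalized $2$-cocycle, which is exactly the classical factor-set data classifying $1 \to K \to G \to H \to 1$. Thus every partial group extension is $B$ of a group extension.

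For injectivity I would argue that an equivalence of partial group extensions $BG_1 \cong BG_2$ is in particular an isomorphism of partial groups, and since both are bar constructions of groups, the functor $B$ is fully faithful on the subcategory of groups — an isomorphism $BG_1 \cong BG_2$ is induced by a unique group isomorphism $G_1 \cong G_2$ (it is determined by its effect on $1$-simplices, which are the group elements, and compatibility with the product on $2$-simplices forces it to be a homomorphism). The commuting squares defining equivalence of partial group extensions then descend to commuting squares of group homomorphisms, giving an equivalence of the original group extensions. The main obstacle I expect is the surjectivity step: one must argue carefully that the domain of definition of the twisted product really does exhaust all words — equivalently that condition (b) of Theorem~\ref{thmStructurePartialGroupExtensions}(i) is vacuous here — and then match the twisting-pair data against the classical factor-set/outer-action data to identify $G$ as a bona fide group extension rather than merely a partial group whose total space happens to be a group. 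Everything else is essentially bookkeeping with the functoriality and faithfulness of $B$.
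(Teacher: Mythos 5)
Your proposal is correct and follows essentially the same route as the paper: both use the structure theorem (via the discussion preceding the statement) to see that the total space of any extension $BK\to\mathbb{E}\to BH$ must be the bar construction of a group, and both use functoriality and full faithfulness of $B$ to match the equivalence relations on the two sides. The only cosmetic difference is that you identify $G$ as a group extension by translating the twisting pair into classical factor-set data, whereas the paper extracts the short exact sequence $1\to K\to G\to H\to 1$ directly from the fibre-bundle maps on $1$-simplices.
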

\begin{proof}
    Using the $5$-lemma, we deduce that the equivalence in the category of groups is actually given by isomorphisms. By \cite[Lemma 2.12]{BrotoGonzalez2021}, we conclude that this also holds in the category of partial groups. Moreover, the above discussion shows that every extension $\mathbb{E}$ of $BH$ by $BK$ can be represented as the bar construction of some group $G$.

    Consider an extension 
    $$BK \xrightarrow{i} BG \xrightarrow{\tau} BH,$$ 
    i.e., a fibre bundle $\tau\colon BG \to BH$ with fibre $BK$. By definition, $\tau$ is surjective. In particular, the induced map on $1$-simplices, $\tau_1\colon G = (BG)_1 \to (BH)_1 = H$, is also surjective and a group homomorphism.

    Furthermore, by examining the unique $0$-simplex in $BH$ and the definition of the fibre bundle, we obtain an injective partial group homomorphism $i\colon BK \to BG$. Restricting to $1$-simplices yields an injective group homomorphism $i_1\colon K \to G$.
    Exactness of the sequence 
    $$1 \to K \xrightarrow{i_1} G \xrightarrow{\tau_1} H \to 1$$ 
    follows from the properties of the fibre bundle and is straightforward to verify. Hence $G$ is a group extension of $H$ by $K$. 

    It is a routine exercise to see that isomorphic fibre bundles induce equivalent group extensions, as the induced group isomorphism makes the relevant diagrams commute.

    Finally, let 
    $$1 \to K \xrightarrow{i} G \xrightarrow{\rho} H \to 1$$ 
    be a group extension. Denote by $Bi$ and $B\rho$ the induced maps on the bar constructions of $i$ and $\rho$, respectively. A simple calculation shows that $B\rho\colon BG \to BH$ is a fibre bundle with fibre $BK$, yielding an extension 
    $$BK \xrightarrow{Bi} BG \xrightarrow{B\rho} BH.$$ 
    An equivalence of group extensions induces an equivalence of partial group extensions via the isomorphism induced by the bar construction.
\end{proof}

Therefore, when discussing group extensions, we need not specify whether we consider them in the category of groups or partial groups, as these notions are equivalent. In particular, any classification result or construction carried out in one setting immediately translates to the other.

We now establish this connection between group extensions and the previously developed framework involving outer actions.
Consider therefore a group extension $$1 \to K \to G \to H \to 1.$$ Then $K \lhd G$ is a normal subgroup, as it is the kernel of a group homomorphism by hypothesis. Moreover, $G/K \cong H$ by the First Isomorphism Theorem. Since $K$ is a normal subgroup of $G$, the group $G$ acts on $K$ by conjugation. In general, this action does not descend to a conjugation action of $G/K$ on $K$. However, by considering the orbits of $K$ acting on itself by conjugation, we naturally obtain an outer action
$$\alpha \colon G/K \cong H \to \operatorname{Aut}(K)/\operatorname{Inn}(K) \cong \operatorname{Out}(K).$$

For arbitrary groups $K$ and $H$ and a group homomorphism $\alpha \colon H \to \operatorname{Out}(K)$, we denote by $\mathcal{E}(K, H, \alpha)$ the equivalence classes of extensions with quotient $H$ and kernel $K$ that induce the homomorphism $\alpha$ in the manner described above. 

Proposition~\ref{propExtensionsGroupsPartial}, together with Theorem~\ref{thmClassificationPartialGroupExtensions}, recovers the known classification theorem for group extensions, originally due to Eilenberg and MacLane~\cite{EilenbergMacLane1947_2}; see also \cite[Theorem IV.8.7 and IV.8.8]{MacLane1967} for a detailed exposition.

\begin{corollary}
    Let $K,H$ be two groups and $\alpha\colon H\to \operatorname{Out}(K)$ a group homomorphism.
    \begin{enumerate}[label=(\roman*)]
        \item The set $\mathcal{E}(K,H,\alpha)$ is nonempty if and only if a certain associated obstruction class $[\kappa]\in H^3\big(H,Z(K)\big)$ vanishes.
        \item If the set $\mathcal{E}(K,H,\alpha)$ of equivalence classes of extensions of $H$ by $K$ with outer action $\alpha$ is nonempty, then $H^2\big(H,Z(K)\big)$ acts freely and transitively on this set.
    \end{enumerate}
    The relevant cohomology is given by group cohomology of $H$ with coefficients in $Z(K)$, where the $H$-module structure is induced by $\alpha$.
\end{corollary}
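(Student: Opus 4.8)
The plan is to transport the classification of partial group extensions from \cref{thmClassificationPartialGroupExtensions} across the bar construction, thereby recovering the classical statement. The starting point is \cref{propExtensionsGroupsPartial}, which furnishes a bijection between equivalence classes of group extensions of $H$ by $K$ and equivalence classes of partial group extensions $BK \to \mathbb{E} \to BH$. The first step is to check that this bijection respects the induced outer actions. A group extension $1 \to K \to G \to H \to 1$ determines an outer action $\alpha \colon H \to \operatorname{Out}(K)$ by conjugation, as described just above, while the associated partial group extension determines an outer action $BH \to B\operatorname{Out}(BK)$ in the sense following \cref{thmStructurePartialGroupExtensions}. Using the identification $\operatorname{Out}(BK) \cong \operatorname{Out}(K)$ recorded in the remark after \cref{propExactSequenceAutomorphismGroups}, together with $(BH)_1 = H$, these two outer actions agree. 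Consequently the bijection of \cref{propExtensionsGroupsPartial} restricts, for each fixed $\alpha$, to a bijection $\mathcal{E}(K,H,\alpha) \cong \mathcal{E}(BK, BH, \alpha)$.

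Next I would apply \cref{thmClassificationPartialGroupExtensions} to the right-hand side. Part (i) asserts that $\mathcal{E}(BK, BH, \alpha)$ is nonempty precisely when an obstruction class $[\kappa] \in H^3_{\phi_\alpha}\big(BH, Z(BK)\big)$ vanishes, and part (ii) asserts that, when nonempty, $H^2_{\phi_\alpha}\big(BH, Z(BK)\big)$ acts freely and transitively on it. It then remains to identify these cohomology groups with classical group cohomology. By the remark after \cref{propExactSequenceAutomorphismGroups} we have $Z(BK) = Z(K)$, and the action $\phi_\alpha$ of $BH$ on $Z(BK)$, obtained by composing $\alpha$ with $\epsilon \colon \operatorname{Out}(K) \to \operatorname{Aut}\big(Z(K)\big)$, agrees on $1$-simplices with the classical $H$-module structure on $Z(K)$ induced by $\alpha$. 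Finally, since $(BH)_n = H^n$, the generalized group cohomology $H^n_{\phi_\alpha}\big(BH, Z(K)\big)$ computed from the cochain complex of \cref{sectionCohomology} coincides with the classical group cohomology $H^n\big(H, Z(K)\big)$ via the bar resolution, as observed in the remark following the definition of $H^n_\phi(\mm, G)$. Specializing to $n = 3$ and $n = 2$ yields the two assertions of the corollary.

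The main obstacle I anticipate is not any single hard computation but the bookkeeping required to verify that the outer action and the induced $H$-module structure on $Z(K)$ genuinely correspond under the chain of identifications $\operatorname{Out}(BK) \cong \operatorname{Out}(K)$, $Z(BK) = Z(K)$, and $(BH)_n = H^n$. In particular one must confirm that the conjugation-induced $\alpha$ from the classical extension is the same outer action that \cref{thmStructurePartialGroupExtensions} extracts from the twisting pair of the bar-construction extension, and that $\epsilon \circ \alpha$ reproduces the standard coefficient module $Z(K)$. Once this compatibility is established, every remaining assertion follows formally by substituting the identifications into \cref{thmClassificationPartialGroupExtensions}.
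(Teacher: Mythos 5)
Your proposal is correct and follows essentially the same route as the paper: transport the problem through the bijection of \cref{propExtensionsGroupsPartial}, apply \cref{thmClassificationPartialGroupExtensions}, and identify $H^\bullet_{\phi_\alpha}\big(BH, Z(BK)\big)$ with $H^\bullet\big(H, Z(K)\big)$ via the bar-resolution remark. The compatibility of outer actions that you flag as the main bookkeeping obstacle is precisely the point the paper also isolates, deferring it to \cite[Remark 4.9]{BrotoGonzalez2021} rather than verifying it by hand.
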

\begin{proof}
    By Proposition~\ref{propExtensionsGroupsPartial}, we have a bijective correspondence:$$\{\text{extensions of $H$ by $K$}\}/\sim \; \xrightarrow[\cong]{B}\;\{\text{extensions $BK\to \mathbb{E}\to BH$}\}/\sim\,.$$
    Furthermore, a group homomorphism $\alpha\colon H \to \operatorname{Out}(K)$ corresponds uniquely to an outer action $\alpha\colon BH \to B\operatorname{Out}(BK)$. Moreover, as established in Section~\ref{secGGroupCohomology}, we have the isomorphism $$H^\bullet_{\phi_\alpha}\big(BH,Z(BK)\big)\cong H^\bullet\big(H,Z(K)\big),$$
    where the $H$-module structure on $Z(K)$ is induced by $\alpha$.
    It remains to show that the above correspondence induces a bijection between $\mathcal{E}(BK, BH, \alpha)$ and $\mathcal{E}(K, H, \alpha)$. This follows directly from \cite[Remark 4.9]{BrotoGonzalez2021}, which completes the proof.
\end{proof}

If the outer action $\alpha\colon H \to \Out(K)$ lifts to a genuine group action $\tilde{\alpha}\colon H \to \Aut(K)$ -- for instance, when $K$ is abelian -- then the first condition is automatically satisfied. In this case, there exists at least one extension, namely the semidirect product $K \rtimes_{\tilde{\alpha}} H$.

\subsection{Extensions of Free Partial Groups}

We now compute the number of equivalence classes of extensions between two free partial groups, as defined in Example~\ref{exFreePartialGroups}.

\begin{prop}\label{propExtensionsFreePartialGroups}
    Let $(X,1)$ and $(Y,1)$ be finite pointed sets with associated free partial groups $\mathbb{X}$ and $\mathbb{Y}$. Then, the number of equivalence classes of extensions of $\mathbb{X}$ by $\mathbb{Y}$ is given by
    $$\big(|Y^*|!\,2^{|Y^*|}\big)^{|X^*|},$$
    where $X^*=X\setminus \{1\}$, respectively $Y^*=Y\setminus \{1\}$.
\end{prop}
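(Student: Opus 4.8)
The plan is to reduce the enumeration to a count of outer actions, using the classification of \cref{thmStructurePartialGroupExtensions} and \cref{thmClassificationPartialGroupExtensions}. Every equivalence class of extensions of $\mathbb{X}$ by $\mathbb{Y}$ has a well-defined induced outer action $\alpha\colon\mathbb{X}\to B\Out(\mathbb{Y})$, and the sets $\mathcal{E}(\mathbb{Y},\mathbb{X},\alpha)$ partition all such classes. By \cref{thmClassificationPartialGroupExtensions}, the set $\mathcal{E}(\mathbb{Y},\mathbb{X},\alpha)$ is nonempty precisely when an obstruction in $H^3_{\phi_\alpha}(\mathbb{X},Z(\mathbb{Y}))$ vanishes, and then it is a torsor under $H^2_{\phi_\alpha}(\mathbb{X},Z(\mathbb{Y}))$. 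Thus the total number of classes equals $\sum_\alpha\lvert\mathcal{E}(\mathbb{Y},\mathbb{X},\alpha)\rvert$, and I would compute this by first controlling the coefficient group $Z(\mathbb{Y})$, then identifying $\Out(\mathbb{Y})$, and finally counting the homomorphisms $\alpha$.

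The decisive first step is to show that the free partial group $\mathbb{Y}$ has trivial normalizer, hence trivial center. Indeed, suppose $\eta\in\mathbb{Y}_1$ lies in $N(\mathbb{Y})$, so that $\eta$ induces a homotopy from $\operatorname{Id}_{\mathbb{Y}}$ to $c_\eta$. Evaluating the defining conditions of such a homotopy on the $1$-simplex $x_1=\eta$ forces $\omega_0=[\eta\,|\,\eta]\in\mathbb{Y}_2$, i.e.\ $(\eta,\eta)\in\mathbb{D}(\mathbb{Y})$. But by the description of the domain in \cref{exFreePartialGroups}, every nonempty word of $\mathbb{D}(\mathbb{Y})$ is an alternating string on a single generator (with $1$'s interspersed), so $(\eta,\eta)$ lies in $\mathbb{D}(\mathbb{Y})$ only when $\eta=1$. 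Hence $N(\mathbb{Y})=\{1\}$, and since $Z(\mathbb{Y})\subseteq N(\mathbb{Y})$ the center is trivial as well. Two consequences follow: first, $H^n_{\phi_\alpha}(\mathbb{X},Z(\mathbb{Y}))=0$ for every $n$ and every $\alpha$, so by \cref{thmClassificationPartialGroupExtensions} the obstruction always vanishes and each outer action carries exactly one equivalence class of extensions; second, the exact sequence of \cref{propExactSequenceAutomorphismGroups} collapses to $\Out(\mathbb{Y})\cong\Aut(\mathbb{Y})$, since $c\colon N(\mathbb{Y})\to\Aut(\mathbb{Y})$ has trivial image. The count therefore reduces to the number of outer actions.

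Next I would identify $\Aut(\mathbb{Y})$. An automorphism is a bijection of $\mathbb{Y}_1=\{1\}\sqcup Y^*\sqcup\widetilde{Y}^*$ fixing $1$, commuting with inversion, and preserving $\mathbb{D}(\mathbb{Y})$ and the product. Because a two-letter word $(y,z)$ with $y\neq z$ in $Y^*$ never lies in $\mathbb{D}(\mathbb{Y})$, each generator must be sent to a generator or to the inverse of a generator; consequently $\beta$ permutes the generator pairs $\{y,\tilde y\}$ and chooses, for each pair, an orientation $y\mapsto z$ or $y\mapsto\tilde z$. Conversely, any such permutation-with-orientation preserves the alternating domain and the counting rule defining $\Pi$, so it is a genuine automorphism. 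This gives $\Aut(\mathbb{Y})\cong(\Z/2)\wr\operatorname{Sym}(Y^*)$, of order $\lvert Y^*\rvert!\,2^{\lvert Y^*\rvert}$. Finally, since $\mathbb{X}$ is free on the pointed set $(X,1)$, the adjunction of \cref{exFreePartialGroups} yields
$$\Hom_{\catname{Part}}\big(\mathbb{X},B\Out(\mathbb{Y})\big)\cong\Hom_{\Set_*}\big((X,1),(\Out(\mathbb{Y}),1)\big),$$
a set of cardinality $\lvert\Out(\mathbb{Y})\rvert^{\lvert X^*\rvert}=\big(\lvert Y^*\rvert!\,2^{\lvert Y^*\rvert}\big)^{\lvert X^*\rvert}$. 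Combined with the conclusion of the previous paragraph that each outer action contributes exactly one class, this gives the stated formula.

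I expect the explicit determination of $\Aut(\mathbb{Y})$ to be the main obstacle: one must argue carefully that the single-generator structure of $\mathbb{D}(\mathbb{Y})$ forces generators to map to generators up to inversion, and then check that every resulting permutation-with-orientation genuinely respects the product rule (including the orientation-reversing case $y\mapsto\tilde z$). By contrast, the triviality of $N(\mathbb{Y})$---the conceptual linchpin that makes all the relevant cohomology vanish and collapses the problem to a homomorphism count---is short once the shape of $\mathbb{D}(\mathbb{Y})$ is recorded.
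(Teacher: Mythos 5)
Your proposal is correct and follows essentially the same route as the paper: triviality of $N(\mathbb{Y})$ (hence of $Z(\mathbb{Y})$) kills all the relevant cohomology, so the classification theorem reduces the count to outer actions, which are enumerated via $\Out(\mathbb{Y})\cong\Aut(\mathbb{Y})\cong\{\pm 1\}^{|Y^*|}\rtimes\Sigma_{|Y^*|}$ and the free--forgetful adjunction. The only cosmetic difference is that you prove $N(\mathbb{Y})=\{1\}$ directly from the shape of $\mathbb{D}(Y)$ (correctly), where the paper simply cites \cite[Lemma 2.9]{BrotoGonzalez2021}.
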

\begin{proof}
    Applying \cite[Lemma 2.9]{BrotoGonzalez2021}, we immediately deduce that normalizers (and consequently centers) of free partial groups are trivial. This implies the vanishing of all cohomology groups $H^i_{\phi_\alpha}\big(\mathbb{X},Z(\mathbb{Y})\big)$ for any outer action $\alpha\colon\mathbb{X}\to B\operatorname{Out}(\mathbb{Y})$. In particular, the obstruction class $\kappa\in H^3_{\phi_\alpha}\big(\mathbb{X},Z(\mathbb{Y})\big)$ in Theorem~\ref{thmClassificationPartialGroupExtensions} vanishes and each outer action yields exactly one equivalence class of extensions. Thus, counting equivalence classes of extensions reduces to counting outer actions. Equivalently, by following the approach of Broto and Gonzalez in \cite{BrotoGonzalez2021} for classifying extensions via the classification theorem for fibre bundles, see the proof of \cite{BrotoGonzalez2021}[Theorem 4.11], we can argue directly by applying Proposition \ref{propExactSequenceAutomorphismGroups} together with \cite{BrotoGonzalez2021}[Corollary 3.3].

    The triviality of the normalizer yields $\operatorname{Aut}(\mathbb{Y})\cong\operatorname{Out}(\mathbb{Y})$. Automorphisms are uniquely determined by their action on $\mathbb{Y}_1$. Moreover, the adjunction between free partial groups and pointed sets yields $$
    \hom_{\catname{Part}}(\mathbb{Y},\mathbb{Y}) \cong \hom_{\Set_*}\big((Y,1), (\{1\}\sqcup Y^*\sqcup \tilde{Y}^*,1)\big).$$
    Here, automorphisms correspond to permutations of $Y^*$ combined with sign choices for generators, yielding $$\operatorname{Out}(\mathbb{Y})\cong \operatorname{Aut(\mathbb{Y})} \cong \{\pm1\}^{|Y^*|} \rtimes \Sigma_{|Y^*|},$$
    where $\Sigma_{|Y^*|}$ is the symmetric group on $|Y^*|$ letters.
    
    For outer actions $\alpha\colon\mathbb{X}\to B\operatorname{Out}(\mathbb{Y})$, we again consider the bijection between the respective homomorphism sets induced by the adjunction $$\hom_{\catname{Part}}(\mathbb{X},B\operatorname{Out}(\mathbb{Y})) \cong \hom_{\Set_*}\big((X,1), (\operatorname{Out}(\mathbb{Y}),1)\big).$$
    Consequently, the number of equivalence classes of extensions is $$\big(|Y^*|!\,2^{|Y^*|}\big)^{|X^*|}.$$
\end{proof}

We now explicitly describe these extensions as follows. Given an outer action $$\alpha\colon \mathbb{X}\to B\operatorname{Out}(\mathbb{Y}) \cong B\operatorname{Aut}(\mathbb{Y}),$$ define an $\mathbb{X}$-twisting pair $(t, \eta)$ for $\mathbb{Y}$ by $$t=\alpha|_{\mathbb{X}_1}\colon \mathbb{X}_1\to \big(B\operatorname{Aut}(\mathbb{Y})\big)_1 = \operatorname{Aut}(\mathbb{Y})\quad \text{and}\quad \eta=\operatorname{const}_1\colon \mathbb{X}_2\to N(\mathbb{Y}).$$ By construction, $t$ satisfies $t(x_1 \cdot x_2) = t(x_1) \circ t(x_2)$ for all $[x_1|x_2] \in \mathbb{X}_2$, which implies the homotopy condition $$t(x_1)\circ t(x_2)=t(x_1\cdot x_2) \xleftarrow{\eta(x_1,x_2)=1} t(x_1\cdot x_2).$$ The remaining conditions of Definition~\ref{dfnTwistingPair} hold trivially.
By Theorem~\ref{thmStructurePartialGroupExtensions}, this twisting pair yields the partial group $\mathbb{Y} \times_{(t, \eta)} \mathbb{X}$, whose $n$-simplices are given by $\big[(y_1,x_1)\big|\ldots\big|(y_n,x_n)\big]$ with $[x_1|\ldots|x_n]\in \mathbb{X}_n$ and $\big[y_1\big|\alpha(x_1)(y_2)\big|\ldots\big|\alpha(x_1\cdot\ldots\cdot x_n)(y_n)\big]\in \mathbb{Y}_n.$
The product and inverse are given respectively by $$\big[(y,x)\big]\cdot \big[(\mu,\lambda)\big]=\big[(y\cdot \alpha(x)(\mu),x\cdot \lambda)\big], \quad\big[(y,x)\big]^{-1}=\big[(\alpha(x^{-1})(y^{-1}),x^{-1})\big].$$

In particular, when $\alpha$ is the trivial outer action (i.e., $\alpha(x) = \operatorname{Id}_{\mathbb{Y}}$ for all $x \in \mathbb{X}_1$), the twisted product $\mathbb{Y} \times_{(t,\eta)} \mathbb{X}$ reduces to the product $\mathbb{Y} \times \mathbb{X}$ of simplicial sets.\\

An interesting question one may consider is the nature of the difference between extensions of free partial groups by free partial groups and extensions of free groups by free groups. The case of partial groups was discussed in Proposition~\ref{propExtensionsFreePartialGroups}. Let $K = F(T)$ and $H = F(S)$ be free groups generated by finite sets $T$ and $S$ respectively. We begin by computing their cohomology groups. For any free group $F(S)$, \cite[Chapter I.4, Example 4.3]{Brown1994} provides a free resolution of the trivial $\mathbb{Z}[F(S)]$-module $\mathbb{Z}$: $$0 \to \mathbb{Z}\big[F(S)\big]^{(S)} \xrightarrow{\delta} \mathbb{Z}\big[F(S)\big] \xrightarrow{\epsilon} \mathbb{Z} \to 0,$$ where $\mathbb{Z}\big[F(S)\big]^{(S)}$ is the free $\mathbb{Z}[F(S)]$-module with basis $(e_s)_{s\in S}$, $\delta(e_s) = s-1$, and $\epsilon(g) = 1$ for all $g \in F(S)$. 
Using the description of group cohomology via $\operatorname{Ext}$-modules, we conclude that $H^n\big(F(S),A\big)$ is trivial for all $n \geq 2$ and any $F(S)$-module $A$. In particular, extensions of $F(S)$ by $F(T)$ are in correspondence with group homomorphisms $\alpha\colon F(S) \to \operatorname{Out}\big(F(T)\big)$.
    
Since the free group functor is left adjoint to the forgetful functor $U\colon \catname{Grp}\to\catname{Set}$, we deduce that $$\hom_\catname{Grp}\Big(F(S),\operatorname{Out}\big(F(T)\big)\Big)\cong \hom_\catname{Set}\Big(S,\operatorname{Out}\big(F(T)\big)\Big).$$
Hence it suffices to calculate $\operatorname{Out}\big(F(T)\big)$. When $|T| = 1$ (so that $F(T) \cong \mathbb{Z}$), the group $\mathbb{Z}$ being abelian implies that $\operatorname{Out}(\mathbb{Z}) \cong \operatorname{Aut}(\mathbb{Z}) \cong \mathbb{Z}/2\mathbb{Z}$. It follows that the number of extensions is $2^{|S|}$.
This matches the count obtained for extensions of free partial groups with a single nontrivial element.
For $|T| \geq 2$, the abelianization map $F(T) \to \mathbb{Z}^{|T|}$ induces a surjection $\operatorname{Out}\big(F(T)\big) \to \operatorname{GL}\big(|T|, \mathbb{Z}\big)$, see \cite[Chapter 1, Proposition 4.4]{LyndonSchupp2001}. As $\operatorname{GL}\big(|T|, \mathbb{Z}\big)$ is countably infinite, there are infinitely many extensions of $F(S)$ by $F(T)$ in this case.
This stands in stark contrast to the partial group case, see Proposition~\ref{propExtensionsFreePartialGroups}, where free partial groups on finite sets yield finitely many extensions. The disparity highlights fundamental differences between these categories.

\printbibliography
\end{document}